\newtheorem{thm}{Theorem}[section]
\newtheorem{cor}[thm]{Corollary}
\newtheorem{lemma}[thm]{Lemma}
\newtheorem{prop}[thm]{Proposition}
\newcommand{\R}{{\mathbb{R}}}
\newcommand{\Z}{{\mathbb{Z}}}
\newcommand{\La}{\triangle}
\newcommand{\bs}{\backslash}
\newcommand{\1}{\partial}
\newcommand{\3}{\varepsilon}
\newcommand{\4}{\widetilde}
\begin{document}
\title{Decay rate and radial symmetry of the exponential elliptic equation}
\author[Kin Ming Hui]{Kin Ming Hui}
\address{Kin Ming Hui:
Institute of Mathematics, Academia Sinica,\\
Taipei, Taiwan, R.O.C.}
\email{kmhui@gate.sinica.edu.tw}
\author[Sunghoon Kim]{Sunghoon Kim}
\address{Sunghoon Kim:
Department of Mathematics and PMI (Pohang Mathematics Institute), 
Pohang University of Science and Technology (POSTECH),\\
Hyoja-Dong San 31, Nam-gu, Pohang 790-784, South Korea}
\email{math.s.kim@postech.ac.kr}
\keywords{decay rate, exponential elliptic equation, radial symmetry}
\date{Sept 3, 2012}
\subjclass[2010]{Primary 35B40 Secondary 35B06, 35J60}
\maketitle
\begin{abstract}
Let $n\ge 3$, $\alpha$, $\beta\in\R$, and let $v$ be a solution 
$\Delta v+\alpha e^v+\beta x\cdot\nabla e^v=0$ in $\R^n$, which satisfies 
the conditions $\lim_{R\to\infty}\frac{1}{\log R}\int_{1}^{R}\rho^{1-n}
\left(\int_{B_{\rho}}e^v\,dx\right)d\rho\in (0,\infty)$ and 
$|x|^2e^{v(x)}\le A_1$ in $\R^n$. We prove that $\frac{v(x)}{\log |x|}\to -2$ 
as $|x|\to\infty$ and $\alpha>2\beta$. As a consequence under a mild 
condition on $v$ we prove that the solution is radially symmetric 
about the origin.  
\end{abstract}
\vskip 0.2truein

\setcounter{equation}{0}
\setcounter{section}{0}

\section{Introduction}
\setcounter{equation}{0}
\setcounter{thm}{0}

In this paper we will study various properties of the 
solution $v$ of the nonlinear elliptic equation

\begin{equation}\label{v-eqn}
\Delta v+\alpha e^v+\beta x\cdot\nabla e^v=0\quad\mbox{ in }\R^n
\end{equation}
for any $n\ge 3$ where $\alpha$, $\beta\in\R$, are some constants. Let
$v=\log u$. Then $u$ satisfies 
\begin{equation}\label{u-eqn}
\Delta\log u+\alpha u+\beta x\cdot\nabla u=0, \quad u>0,\quad\mbox{ in }\R^n.
\end{equation}
As observed by S.Y.~Hsu \cite{Hs3}, the radial symmetric solution of 
\eqref{u-eqn} is the singular limit of the radial symmetric solutions 
of the nonlinear elliptic equation,
\begin{equation}\label{m-elliptic-eqn}
\Delta (u^m/m)+\alpha u+\beta x\cdot\nabla u=0, \quad u>0,\quad\mbox{ in }
\R^n,
\end{equation}
as $m\searrow 0$. On the other hand as observed by P.~Daskalopoulos and 
N.~Sesum \cite{DS}, K.M.~Hui and S.H.~Kim \cite{HK1}, \cite{HK2}, 
\eqref{u-eqn} also arises in the study of the extinction behaviour and 
global behaviour of the solutions of the logarithmic diffusion equation,
\begin{equation}\label{log-eqn}
u_t=\Delta \log u,\quad u>0,\quad\mbox{ in }\R^n.
\end{equation}
\eqref{u-eqn} also arises in the study of self-similar solutions of 
\eqref{log-eqn} (\cite{DS}, \cite{HK1}, \cite{HK2}, \cite{V1}, \cite{V2}).
Hence in order to understand the behaviour of the solutions of 
\eqref{m-elliptic-eqn} and \eqref{log-eqn} it is important to understand
the properties of solutions of \eqref{v-eqn}.

In \cite{Hs2} S.Y.~Hsu proved that there exists a radially symmetric solution 
of \eqref{v-eqn} ( or equivalently \eqref{u-eqn}) if and only if either 
$\alpha\ge 0$ or $\beta>0$. She also proved that when $n\ge 3$ and 
$\alpha>\max (2\beta,0)$, then any radially symmetric solution $v$ of 
\eqref{v-eqn} satisfies
\begin{equation}\label{e^v-limit}
\lim_{|x|\to\infty}|x|^2e^{v(x)}=\frac{2(n-2)}{\alpha-2\beta}.
\end{equation} 
By \eqref{e^v-limit} and a direct computation the radially symmetric 
solution $v$ of \eqref{v-eqn} satisfies
\begin{equation}\label{v-limit}
\lim_{|x|\to\infty}\frac{v(x)}{log |x|}=-2,
\end{equation} 
\begin{equation}\label{e^v-integral-cond}
A_0:=\lim_{R\to\infty}\frac{1}{\log R}\int_{1}^{R}\frac{1}{\rho^{n-1}}
\left(\int_{|x|<\rho}e^v\,dx\right)d\rho\in (0,\infty)
\end{equation}
and
\begin{equation}\label{e^v-bd}
|x|^2e^{v(x)}\le A_1\quad\forall x\in\R^n
\end{equation}
for some constant $A_1>0$. A natural question is if $v$ is a solution of
\eqref{v-eqn} which satisfies \eqref{e^v-integral-cond} 
and \eqref{e^v-bd} for some constant $A_1>0$, will $v$ satisfy \eqref{v-limit}
and is $v$ radially symmetric about the origin? We answer the first
question in the affirmative in this paper. For the second question we 
prove that under some conditions on the solution $v$ of \eqref{v-eqn},
$v$ is radially symmetric about the origin. 

For any solution $v$ of \eqref{v-eqn} we define the rotation operator 
$\Phi_{ij}$ by
\begin{equation*}
\Phi_{ij}(x)=x_iv_{x_j}(x)-x_jv_{x_i}(x), \qquad \forall x=(x_1,\dots,x_n)
\in\R^n,i\ne j,i,j=1,\cdots,n.
\end{equation*}
Note that if we write $x_1=\rho\cos\theta$ and $x_2=\rho\sin\theta$ where
$\rho=\sqrt{x_1^2+x_2^2}$, then $\Phi_{12}(x)=\frac{\1 v}{\1\theta}(x)$.
We are now ready to state the main results of this paper.

\begin{thm}\label{v-limit-thm}
Let $n\ge 3$ and $\alpha,\beta\in\R$. Suppose $v$ is a solution of 
\eqref{v-eqn} which satisfies \eqref{e^v-integral-cond} and \eqref{e^v-bd} 
for some constant $A_1>0$. Then $v$ satisfies \eqref{v-limit} and 
$\alpha>2\beta$.
\end{thm}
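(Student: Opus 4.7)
The strategy is to convert \eqref{v-eqn} into an ODE for the spherical mean $M(R):=\frac{1}{n\omega_n R^{n-1}}\int_{\1 B_R} v\,dS$, extract the limit of $M(R)/\log R$, upgrade the spherical-mean convergence to pointwise by rescaling and an oscillation estimate, and finally rule out slow decay using \eqref{e^v-integral-cond}.

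Setting $I(R):=\int_{B_R}e^v\,dx$, so that $I'(R)=\int_{\1 B_R}e^v\,dS$, I would integrate \eqref{v-eqn} over $B_R$ and apply the divergence theorem together with the identity $\int_{B_R}x\cdot\D e^v\,dx=RI'(R)-nI(R)$ to obtain
\[
n\omega_n R^{n-1}M'(R)=\int_{\1 B_R}\1_\nu v\,dS=(n\beta-\alpha)I(R)-\beta R I'(R).
\]
Dividing by $R^{n-1}$, integrating from $1$ to $R$, and applying integration by parts to $\int_1^R r^{2-n}I'(r)\,dr$ leads to
\[
n\omega_n\bigl(M(R)-M(1)\bigr)=(2\beta-\alpha)\int_1^R r^{1-n}I(r)\,dr-\beta R^{2-n}I(R)+\beta I(1).
\]
By \eqref{e^v-bd}, $I(R)\le C(1+R^{n-2})$, so $R^{2-n}I(R)$ stays bounded. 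Dividing by $\log R$ and invoking \eqref{e^v-integral-cond} yields
\[
\lim_{R\ra\infty}\frac{M(R)}{\log R}=\frac{(2\beta-\alpha)A_0}{n\omega_n}=:c.
\]
The pointwise estimate $v(x)\le\log A_1-2\log|x|$ from \eqref{e^v-bd} forces $M(R)\le\log A_1-2\log R$, hence $c\le-2$; since $A_0>0$, this gives $\alpha-2\beta\ge 2n\omega_n/A_0>0$, proving the second conclusion $\alpha>2\beta$.

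To upgrade to the pointwise statement, I would rescale: $\4v(y):=v(Ry)+2\log R$ again solves \eqref{v-eqn}, now on the annulus $\{1/2\le|y|\le 2\}$, with $e^{\4v(y)}\le A_1/|y|^2\le 4A_1$, so the rescaled equation has coefficients uniformly bounded in $R$. An oscillation estimate $\osc_{\1 B_1}\4v=o(\log R)$ (equivalently $\osc_{\1 B_R}v=o(\log R)$) would then give $v(x)=M(|x|)+o(\log|x|)$ uniformly on $\1 B_{|x|}$, and hence $v(x)/\log|x|\ra c$. If $c<-2$, then for any $\3\in(0,-c-2)$ we have $e^{v(x)}\le|x|^{c+\3}$ for $|x|$ large, so $I(R)=O(R^{n+c+\3})$ with $n+c+\3<n-2$; therefore $\int_1^\infty r^{1-n}I(r)\,dr<\infty$, contradicting $A_0>0$. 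Hence $c=-2$ and $v(x)/\log|x|\ra -2$.

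The main obstacle is the oscillation estimate for $\4v$. Because $\4v$ is only a priori bounded from above, standard Moser Harnack inequalities for nonnegative solutions do not apply directly. Two natural approaches are: (i) pass to the strictly positive and uniformly bounded $\4{u}:=e^{\4v}$, which satisfies \eqref{u-eqn}, and invoke a Harnack-type estimate adapted to the degenerate logarithmic equation; or (ii) establish a Bernstein-type gradient bound showing $|x|\,|\D v(x)|$ is uniformly bounded at infinity, which integrates to an $O(1)$ oscillation bound on spheres.
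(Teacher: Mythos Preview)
Your spherical-mean computation is correct and essentially recovers the paper's Lemma~2.2; your derivation of $\alpha>2\beta$ from the pointwise upper bound $v(x)\le\log A_1-2\log|x|$ is in fact slightly cleaner than the paper's route (which deduces $\alpha>2\beta$ only after the full pointwise limit). The final contradiction argument ruling out $c<-2$ is also the same as the paper's.

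The gap is exactly where you locate it: the upgrade from $M(R)/\log R\to c$ to $v(x)/\log|x|\to c$. Neither of your two suggested fixes is straightforward under the hypotheses \eqref{e^v-integral-cond}--\eqref{e^v-bd} alone. For route~(ii), note that the paper itself imposes $\|x\cdot\nabla v\|_{L^\infty}<\infty$ as an \emph{additional} assumption in Theorem~\ref{radial-symmetric-thm} (and Proposition~3.2), which is a strong hint that a Bernstein bound does not follow from \eqref{e^v-bd} alone. For route~(i), the rescaled $\tilde v$ is only bounded above, so after subtracting a local Newtonian potential you are left with a harmonic function bounded above on an annulus, which gives no oscillation control; and the Harnack-type inequalities available for $\Delta\log u+f(u,\nabla u)=0$ with merely $0<u\le C$ do not, without further input (e.g.\ a lower bound on $u$ or an $L^p$ bound on $\log u$), yield $\osc\,\tilde v=o(\log R)$.

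The paper sidesteps this entirely by a potential-theoretic representation. It defines
\[
w_1(x)=\frac{1}{n(2-n)\omega_n}\int_{\R^n}\Bigl(\frac{1}{|x-y|^{n-2}}-\frac{1}{|y|^{n-2}}\Bigr)e^{v(y)}\,dy,
\qquad
w_2(x)=\frac{1}{n\omega_n}\int_{\R^n}\Bigl(\frac{(x-y)\cdot y}{|x-y|^n}+\frac{1}{|y|^{n-2}}\Bigr)e^{v(y)}\,dy,
\]
shows (using only \eqref{e^v-bd} and the spherical-mean bound $\rho^2\int_{S^{n-1}}e^{v(\rho\sigma)}\,d\sigma\to(n-2)A_0$) that both integrals converge and that $q:=v+(\alpha-n\beta)w_1+\beta w_2$ is harmonic in $\R^n$. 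Growth estimates $|w_i(x)|\le C\log|x|$ together with the mean-value property force $q\equiv v(0)$. One then computes \emph{pointwise} limits $w_1(x)/\log|x|\to A_0/(n\omega_n)$ and $w_2(x)/\log|x|\to(n-2)A_0/(n\omega_n)$ directly from the integral formulas, yielding $v(x)/\log|x|\to -(\alpha-2\beta)A_0/(n\omega_n)$ with no oscillation or Harnack step needed. This representation is the key idea you are missing; it replaces your proposed regularity argument by explicit asymptotics of convolution integrals.
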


\begin{cor}\label{non-existence-cor}
Let $n\ge 3$. Suppose $\alpha\le 2\beta$. Then \eqref{v-eqn} does not 
have any solution that satisfies both \eqref{e^v-integral-cond} and 
\eqref{e^v-bd} for some constant $A_1>0$.
\end{cor}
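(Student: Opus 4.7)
The plan is to derive this as the immediate contrapositive of Theorem \ref{v-limit-thm}. I would argue by contradiction: suppose there exists a solution $v$ of \eqref{v-eqn} satisfying both \eqref{e^v-integral-cond} and \eqref{e^v-bd} for some constant $A_1 > 0$. Theorem \ref{v-limit-thm} then forces the strict inequality $\alpha > 2\beta$, which directly contradicts the standing hypothesis $\alpha \le 2\beta$. Hence no such solution can exist, which is precisely the claim of Corollary \ref{non-existence-cor}.

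In other words, there is essentially no work to perform at the level of the corollary itself: its entire substance is already packaged in the concluding clause ``$\alpha > 2\beta$'' of Theorem \ref{v-limit-thm}. There is no additional estimate to carry out and no obstacle to overcome at this stage. The real difficulty lies upstream, in the proof of the theorem, namely in ruling out the borderline and reverse cases $\alpha \le 2\beta$. I would expect that step to rely on combining the integrability condition \eqref{e^v-integral-cond} and the quadratic pointwise bound \eqref{e^v-bd} with a Pohozaev-type identity, or equivalently with an integration of \eqref{v-eqn} against a radial test function and a careful pass to the limit in $R \to \infty$, yielding an identity whose sign is compatible with a finite positive value of $A_0$ only when $\alpha - 2\beta > 0$.
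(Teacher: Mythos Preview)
Your proof is correct and matches the paper's treatment: the corollary is stated immediately after Theorem~\ref{v-limit-thm} with no separate argument, precisely because it is the contrapositive of the clause ``$\alpha>2\beta$'' in that theorem. One minor remark on your closing speculation: the paper's upstream proof of Theorem~\ref{v-limit-thm} does not proceed via a Pohozaev-type identity but instead represents $v$ through Newtonian potentials $w_1,w_2$ of $e^v$ and $\operatorname{div}(xe^v)$, computes $\lim_{|x|\to\infty}v(x)/\log|x|=-(\alpha-2\beta)A_0/(n\omega_n)$, and then argues that this limit must equal $-2$ by testing against \eqref{e^v-integral-cond}; the positivity of $A_0$ then forces $\alpha>2\beta$.
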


\begin{thm}\label{radial-symmetric-thm}
Let $n\ge 3$ and $2\beta<\alpha<n\beta$. Suppose $v$ is a solution of 
\eqref{v-eqn} which satisfies \eqref{e^v-integral-cond}, \eqref{e^v-bd}, 
\begin{equation}\label{grad-v-bd-assumption}
\left\|x\cdot\nabla v\right\|_{L^{\infty}(\R^n)}\leq C<\infty.
\end{equation}
and
\begin{equation}\label{radial-symm-assum-infty}
\lim_{|x|\to\infty}|x|^{n-2}\left|\Phi_{ij}(x)\right|=0,\quad 
\forall i\ne j, i,j=1,\cdots,n.
\end{equation}
Then there exists a constant $R_0>0$ such that if $v$ is radially symmetric 
in $B_{R_0}$, then $v$ is radially symmetric in $\R^n$.
\end{thm}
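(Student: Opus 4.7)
The plan is to derive a linear elliptic equation for the angular derivative $\Phi:=\Phi_{ij}$ and then to show that $\Phi\equiv 0$ on $\R^n\setminus B_{R_0}$ via an energy identity on that exterior domain, using the decay and boundedness hypotheses \eqref{radial-symm-assum-infty}, \eqref{grad-v-bd-assumption}, \eqref{e^v-bd}.

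First I would derive the PDE satisfied by $\Phi$. Differentiating \eqref{v-eqn} in $x_j$ and $x_i$, multiplying the resulting equations by $x_i$ and $-x_j$ respectively and adding, and using the identities $x_i\Delta v_{x_j}-x_j\Delta v_{x_i}=\Delta\Phi$ together with $x_i(x\cdot\nabla v_{x_j})-x_j(x\cdot\nabla v_{x_i})=x\cdot\nabla\Phi-\Phi$, the nonlinear pieces collapse cleanly into $\beta\,x\cdot\nabla(e^v\Phi)$, giving
\begin{equation*}
\Delta\Phi + \alpha e^v\Phi + \beta\,x\cdot\nabla(e^v\Phi) = 0 \quad\text{in } \R^n,
\end{equation*}
equivalently $\Delta\Phi + \beta e^v(x\cdot\nabla\Phi) + [\alpha+\beta(x\cdot\nabla v)]e^v\Phi = 0$. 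Since $v$ is radial on $B_{R_0}$, all angular derivatives of $v$ vanish there, so $\Phi\equiv 0$ on $B_{R_0}$ and in particular $\Phi=0$ on $\partial B_{R_0}$.

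Next I would multiply the PDE by $\Phi$ and integrate by parts on the annulus $A_R=B_R\setminus B_{R_0}$; using $\nabla\cdot(e^v x)=(n+x\cdot\nabla v)e^v$ to rewrite the drift term, this yields
\begin{equation*}
\int_{A_R}|\nabla\Phi|^2\,dx = \left(\alpha-\tfrac{n\beta}{2}\right)\!\int_{A_R}\!e^v\Phi^2\,dx + \tfrac{\beta}{2}\!\int_{A_R}\!e^v(x\cdot\nabla v)\Phi^2\,dx + \mathcal{B}_R,
\end{equation*}
where $\mathcal{B}_R$ is a surface integral on $\partial B_R$ (the $\partial B_{R_0}$ contribution vanishes because $\Phi=0$ there). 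Using \eqref{radial-symm-assum-infty} together with a Schauder-type gradient estimate applied at scale $R$ to the PDE for $\Phi$ (whose coefficients are dominated by multiples of $|x|^{-2}$ thanks to \eqref{e^v-bd} and \eqref{grad-v-bd-assumption}), one obtains $\|\nabla\Phi\|_{L^\infty(\partial B_R)}=o(R^{1-n})$ and hence $\mathcal{B}_R\to 0$ as $R\to\infty$. To control the right-hand side I combine \eqref{grad-v-bd-assumption}, the bound $e^v\le A_1/|x|^2$, and the key observation that $\Phi_{ij}$ is an angular derivative of $v$ and so has zero spherical mean at every radius; the Poincar\'e--Wirtinger inequality on $S^{n-1}$ then gives the sharpened Hardy-type bound $\int_{A_R}\Phi^2/|x|^2\,dx\le \tfrac{1}{n-1}\int_{A_R}|\nabla\Phi|^2\,dx$.

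Putting these pieces together gives an inequality $\int_{A_R}|\nabla\Phi|^2\le\kappa\int_{A_R}|\nabla\Phi|^2+o(1)$ for a constant $\kappa=\kappa(n,\alpha,\beta,C,A_1,R_0)$. The main obstacle, and the reason the hypothesis $2\beta<\alpha<n\beta$ is essential, is arranging $\kappa<1$: the range $2\beta<\alpha<n\beta$ is exactly what is expected to balance $\alpha-n\beta/2$ against the term $\tfrac{\beta}{2}(x\cdot\nabla v)$, since by Theorem \ref{v-limit-thm} one has $v(x)\sim-2\log|x|$ at infinity, so heuristically the weighted integral $\tfrac{\beta}{2}\!\int e^v(x\cdot\nabla v)\Phi^2$ behaves like $-\beta\!\int e^v\Phi^2$ for $|x|\ge R_0$ large, reducing the net coefficient and allowing the closure. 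Making this asymptotic balance quantitative — in particular sharpening the effective constant for $|x|^2 e^v$ beyond the a priori bound $A_1$ from \eqref{e^v-bd} — is the technical heart of the proof. Once $\kappa<1$ is established, the identity forces $\nabla\Phi\equiv 0$ on $\R^n\setminus B_{R_0}$, and combined with $\Phi|_{\partial B_{R_0}}=0$ this gives $\Phi\equiv 0$ in $\R^n$ for every pair $i\neq j$, i.e., $v$ is radially symmetric.
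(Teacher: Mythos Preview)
Your derivation of the linear equation for $\Phi=\Phi_{ij}$ is correct, and the energy identity on $A_R=B_R\setminus B_{R_0}$ is set up properly. However, the closure step---showing $\kappa<1$---is a genuine gap, and you yourself flag it. The issue is structural: after inserting the asymptotic $x\cdot\nabla v\approx -2$, the coefficient in front of $\int e^v\Phi^2$ becomes $\alpha-\tfrac{(n+2)\beta}{2}$, which can be strictly positive in the range $2\beta<\alpha<n\beta$ (e.g.\ $n=3$, $\beta=1$, $\alpha=2.6$). You then need $e^v\le c/|x|^2$ with a \emph{specific} constant $c$ to beat the Poincar\'e--Wirtinger factor, but the hypotheses only give $|x|^2e^v\le A_1$ with $A_1$ arbitrary. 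The sharp pointwise limit $|x|^2e^v\to 2(n-2)/(\alpha-2\beta)$ is known for radial solutions, but is \emph{not} established (and is not established in the paper) for general solutions satisfying only \eqref{e^v-integral-cond}--\eqref{grad-v-bd-assumption}; only the spherical average has this limit. So the energy route, as written, cannot close without an additional pointwise result that is not available.

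The paper avoids this difficulty entirely by a different mechanism. Instead of an $L^2$ argument it sets $w(x)=|x|^{n-2}\Phi(x)$; a short computation shows $w$ solves
\[
\Delta w+\Bigl(\tfrac{2\nabla g}{g}+\beta e^{v}x\Bigr)\cdot\nabla w
+\bigl[\alpha+\beta(x\cdot\nabla v)+(2-n)\beta\bigr]e^{v}w=0,
\qquad g(x)=|x|^{2-n}.
\]
The crucial preliminary step (Proposition~\ref{cor-behaviour-of-x-cdot-nabla-v-at-infty-d}) upgrades Theorem~\ref{v-limit-thm} to the pointwise statement $x\cdot\nabla v\to -2$ uniformly as $|x|\to\infty$; this uses \eqref{grad-v-bd-assumption} and an interior gradient estimate for $V=x\cdot\nabla v$. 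Choosing $R_0$ so that $x\cdot\nabla v<-2+\varepsilon$ for $|x|\ge R_0$ makes the zeroth-order coefficient $\bigl[\alpha-n\beta+\varepsilon\beta\bigr]e^v<0$ on $\{|x|\ge R_0\}$. Since $w\equiv 0$ on $\partial B_{R_0}$ by the radial hypothesis and $w\to 0$ at infinity by \eqref{radial-symm-assum-infty}, the classical maximum principle forces $w\equiv 0$. The point is that the maximum principle only needs the \emph{sign} of the zeroth-order coefficient, not any quantitative bound on $e^v$; this is exactly what your energy approach lacks. The weight $|x|^{n-2}$ is also what makes the decay hypothesis \eqref{radial-symm-assum-infty} natural: it is precisely the condition $w\to 0$ at infinity.
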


Note that although there are many research done on the radial symmetry of
elliptic equations without first order term by B.~Gidas, W.M.~Ni, and 
L.~Nirenberg \cite{GNN}, L.~Caffaralli, B.~Gidas and J.~Spruck \cite{CGS},
W.~Chen and C.~Li \cite{CL}, S.D.~Taliaferro \cite{T} and others, very 
little is known about the radial symmetry of elliptic equations with 
non-zero first order term.
The reason is that one cannot use the moving plane technique to prove the 
radial symmetry of the solution for elliptic equations with non-zero first 
order term. The recent paper \cite{KM} by E.~Kamalinejad and A.~Moradifam
is one of the few papers that studies the radial symmetry of
elliptic equations with non-zero first order term. Hence our result on
radial symmetry is new.

The plan of the paper is as follows. In section two we will prove 
Theorem \ref{v-limit-thm} and Corollary \ref{non-existence-cor}. In 
section three we will prove Theorem \ref{radial-symmetric-thm}. 
For any $r>0$, $x_0\in\R^n$, let $B_r(x_0)=\{x\in\R^n:|x-x_0|<r\}$
and $B_r=B_r(0)$. Let $S^{n-1}=\{x\in\R^n:|x|=1\}$. We will let $n\ge 3$, 
$\alpha, \beta\in\R$, and let 
$v$ be a solution of \eqref{v-eqn} which satisfies both 
\eqref{e^v-integral-cond} and \eqref{e^v-bd} for some constant $A_1>0$ 
for the rest of the paper. We will also let $A_0$ be given by 
\eqref{e^v-integral-cond} for the rest of the paper.

\section{Decay rate of the solution of \eqref{v-eqn}}
\setcounter{equation}{0}
\setcounter{thm}{0}

In this section we will use a modification of the technique of
\cite{Hs1} to prove the decay rate \eqref{v-limit} for $v$. We first 
start with a lemma.

\begin{lemma}
For any $x_0\in\R^n$, we have
\begin{equation}\label{eq-general-of-condition-1}
\lim_{R\to\infty}\frac{1}{\log R}\int_{1}^{R}\frac{1}{\rho^{n-1}}
\left(\int_{B_{\rho}(x_0)}e^v\,dx\right)d\rho=A_0.
\end{equation}
\end{lemma}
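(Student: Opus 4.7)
The plan is to reduce the assertion to the known case $x_0=0$ in \eqref{e^v-integral-cond} by showing that the discrepancy between the two integrals, after the $\log R$-averaging, is negligible. The key observation will be that shifting the center of the ball by a fixed vector $x_0$ only affects a thin annular shell near the boundary, whose mass is controlled by the pointwise bound \eqref{e^v-bd}.

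First, I would note that for any $\rho>|x_0|$, the symmetric difference $B_\rho(x_0)\triangle B_\rho(0)$ is contained in the spherical shell $A_\rho:=B_{\rho+|x_0|}(0)\setminus B_{\rho-|x_0|}(0)$, so
$$\bigg|\int_{B_\rho(x_0)} e^v\,dx-\int_{B_\rho(0)} e^v\,dx\bigg|\leq \int_{A_\rho} e^v\,dx.$$
Next, I would apply \eqref{e^v-bd} to replace $e^v$ by $A_1/|x|^2$ on $A_\rho$ and compute in polar coordinates:
$$\int_{A_\rho} e^v\,dx\leq A_1|S^{n-1}|\int_{\rho-|x_0|}^{\rho+|x_0|} r^{n-3}\,dr=O(|x_0|\,\rho^{n-3})\quad\text{as }\rho\to\infty.$$

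Then I would divide by $\rho^{n-1}$ to obtain an error of size $O(\rho^{-2})$, and integrate from any fixed $\rho_0>|x_0|$ to $R$ to get a quantity bounded uniformly in $R$. The contribution from the residual range $\rho\in[1,\rho_0]$ is a finite constant independent of $R$. Dividing by $\log R$ sends both pieces to zero as $R\to\infty$, so \eqref{e^v-integral-cond} immediately implies \eqref{eq-general-of-condition-1}.

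I do not anticipate any real obstacle here: the lemma is essentially the statement that the integral condition \eqref{e^v-integral-cond} is translation-invariant, which frees later arguments to recentre at any point of $\R^n$. The only technicality is the annular shell estimate, which is exactly where the pointwise decay bound \eqref{e^v-bd} comes in; this also illustrates how the two standing hypotheses of the theorem are used in tandem rather than in isolation.
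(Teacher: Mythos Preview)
Your argument is correct, but it follows a genuinely different route from the paper's. The paper never invokes the pointwise bound \eqref{e^v-bd}; instead it uses only the ball inclusions $B_{\rho-|x_0|}\subset B_\rho(x_0)\subset B_{\rho+|x_0|}$ together with a change of variable in the outer $\rho$-integral, then appeals directly to \eqref{e^v-integral-cond} to match the liminf and limsup with $A_0$. Your approach bounds the symmetric difference $B_\rho(x_0)\triangle B_\rho(0)$ by an annulus and uses \eqref{e^v-bd} to show the resulting mass is $O(\rho^{n-3})$, hence negligible after dividing by $\rho^{n-1}\log R$. The trade-off: your argument is shorter and gives an explicit $O(\rho^{-2})$ error rate, but it consumes the extra hypothesis \eqref{e^v-bd}; the paper's proof shows that the lemma is in fact a consequence of \eqref{e^v-integral-cond} alone, which is a marginally stronger conclusion and clarifies that translation-invariance of the averaged integral does not depend on any pointwise decay of $e^v$.
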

\begin{proof}
Let $x_0\in\R^n$ and $\rho>|x_0|$. Since $B_{\rho-|x_0|}\subset B_{\rho}(x_0)$,
for any $R>R_0>|x_0|+1$, 
\begin{equation*}
\begin{aligned}
&\frac{1}{\log R}\int_{1}^{R}\frac{1}{\rho^{n-1}}
\left(\int_{B_{\rho}(x_0)}e^v\,dx\right)d\rho\\
&\quad \ge\frac{1}{\log R}\int_{R_0}^{R}\frac{1}{\rho^{n-1}}
\left(\int_{B_{\rho-|x_0|}}e^v\,dx\right)d\rho\\
&\quad \ge\frac{1}{\log R}\left(\frac{R_0-|x_0|}{R_0}\right)^{n-1}
\int_{R_0-|x_0|}^{R-|x_0|}\frac{1}{\rho^{n-1}}\left(\int_{B_{\rho}}e^v\,dx\right)
d\rho\\
&\quad =-\frac{1}{\log R}\left(\frac{R_0-|x_0|}{R_0}\right)^{n-1}\int_{1}^{R_0-|x_0|}
\frac{1}{\rho^{n-1}}\left(\int_{B_{\rho}}e^v\,dx\right)d\rho\\
&\qquad +\frac{\log (R-|x_0|)}{\log R}\left(\frac{R_0-|x_0|}{R_0}\right)^{n-1}
\left[\frac{1}{\log(R-|x_0|)}\int_{1}^{R-|x_0|}\frac{1}{\rho^{n-1}}
\left(\int_{B_{\rho}}e^v\,dx\right)d\rho\right].
\end{aligned}
\end{equation*}
Letting $R\to\infty$, 
\begin{align}
&\liminf_{R\to\infty}\frac{1}{\log R}\int_{1}^{R}\frac{1}{\rho^{n-1}}
\left(\int_{B_{\rho}(x_0)}e^v\,dx\right)d\rho\geq \left(\frac{R_0-|x_0|}{R_0}
\right)^{n-1}A_0\notag\\
&\qquad \Rightarrow \liminf_{R\to\infty}\frac{1}{\log R}
\int_{1}^{R}\frac{1}{\rho^{n-1}}\left(\int_{B_{\rho}(x_0)}e^v\,dx\right)
d\rho\geq A_0\qquad\mbox{ as }R_0\to\infty
\label{liminf-e^v-integral-lower-bd}
\end{align}
Similarly,
\begin{equation}\label{limsup-e^v-integral-upper-bd}
\limsup_{R\to\infty}\frac{1}{\log R}\int_{1}^{R}\frac{1}{\rho^{n-1}}
\left(\int_{B_{\rho}(x_0)}e^v\,dx\right)d\rho\leq A_0.
\end{equation}
By \eqref{liminf-e^v-integral-lower-bd} and 
\eqref{limsup-e^v-integral-upper-bd}, we get 
\eqref{eq-general-of-condition-1} and the lemma follows.
\end{proof}

\begin{lemma}\label{lem-limit-of-v-as-rho-to-infty}
For any $x_0\in\R^n$, we have
\begin{equation}\label{e^v-integral-mean}
\lim_{\rho\to\infty}\rho^{2}\int_{|\sigma|=1}e^{v(x_0+\rho\sigma)}\,d\sigma 
=(n-2)A_0
\end{equation}
and
\begin{equation}\label{v-integral-log-mean}
\lim_{\rho\to\infty}\left[\frac{1}{\log\rho}\int_{|\sigma|=1}v(x_0+\rho\sigma)
\,d\sigma\right]=-(\alpha-2\beta)A_0.
\end{equation}
\end{lemma}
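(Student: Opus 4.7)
The plan is to derive a single master identity via integration by parts, then deduce the two limits from it. For $x_0\in\R^n$, set $\Psi(\rho)=\int_{|\sigma|=1}v(x_0+\rho\sigma)\,d\sigma$, $\psi(\rho)=\int_{|\sigma|=1}e^{v(x_0+\rho\sigma)}\,d\sigma$, and $F(\rho)=\int_{B_\rho(x_0)}e^v\,dx$, so that $F'(\rho)=\rho^{n-1}\psi(\rho)$. First I would rewrite \eqref{v-eqn} in the divergence form $\nabla\cdot(\nabla v+\beta x e^v)+(\alpha-n\beta)e^v=0$, integrate over $B_R(x_0)$, apply the divergence theorem, and decompose $x\cdot\nu=(x-x_0)\cdot\nu+x_0\cdot\nu$ on $\partial B_R(x_0)$. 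This yields the key identity
$$R^{n-1}\Psi'(R)=-(\alpha-n\beta)F(R)-\beta RF'(R)-\beta R^{n-1}h(R),$$
where $h(R)=\int_{|\sigma|=1}(x_0\cdot\sigma)e^{v(x_0+R\sigma)}\,d\sigma$ is a boundary correction controlled by $|h(R)|\le|x_0|\psi(R)=O(R^{-2})$ via \eqref{e^v-bd}.

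To prove \eqref{v-integral-log-mean}, I would divide the identity by $R^{n-1}$ and integrate from $1$ to $R$. The crucial computation is the integration by parts
$$\int_1^R\rho\psi(\rho)\,d\rho=\int_1^R\frac{F'(\rho)}{\rho^{n-2}}\,d\rho=\frac{F(R)}{R^{n-2}}+(n-2)\int_1^R\frac{F(\rho)}{\rho^{n-1}}\,d\rho+O(1),$$
which after substitution yields
$$\Psi(R)=-(\alpha-2\beta)\int_1^R\frac{F(\rho)}{\rho^{n-1}}\,d\rho-\beta\frac{F(R)}{R^{n-2}}+O(1).$$
Since \eqref{e^v-bd} forces $F(R)\le CR^{n-2}$, the quantity $F(R)/R^{n-2}$ is bounded; dividing by $\log R$ and invoking \eqref{eq-general-of-condition-1} gives \eqref{v-integral-log-mean}.

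For \eqref{e^v-integral-mean}, set $L(R):=F(R)/R^{n-2}$. A direct differentiation gives $\rho^2\psi(\rho)=\rho L'(\rho)+(n-2)L(\rho)$, so the claim reduces to showing $L(R)\to A_0$ and $RL'(R)\to 0$. Substituting this relation back into the master identity (divided by $R^{n-1}$ and then multiplied by $R$) produces the linear first-order ODE
$$\beta RL'(R)+(\alpha-2\beta)L(R)=-R\Psi'(R)+O(R^{-1}).$$
When $\beta\ne 0$, I would solve this ODE in the variable $\tau=\log R$ via the integrating factor $R^{(\alpha-2\beta)/\beta}$ and plug in the asymptotics for $\Psi$ from \eqref{v-integral-log-mean} to identify $L(R)\to A_0$; the case $\beta=0$ is immediate since then $R\Psi'(R)=-\alpha L(R)+O(R^{-1})$ combined with the known growth of $\Psi$ gives $L$ directly.

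The main obstacle will be the Tauberian step in the last paragraph: the logarithmic Ces\`aro information carried by \eqref{eq-general-of-condition-1} must be upgraded to a genuine pointwise limit of $L$. The ODE provides a first-order structural constraint, and the a priori bound $\rho^2\psi\le A_1|S^{n-1}|$ ensures that $RL'$ is bounded, i.e.\ $L$ is Lipschitz in $\log R$; combined with the integrating-factor representation this should suffice. Care is needed when the exponent $(\alpha-2\beta)/\beta$ has the ``wrong'' sign, in which case one must argue via the a priori boundedness of $L$ rather than exponential decay of the homogeneous mode.
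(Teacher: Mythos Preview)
Your derivation of \eqref{v-integral-log-mean} is correct and in fact cleaner than the paper's: by integrating by parts in $\int_1^R\rho\psi(\rho)\,d\rho$ you reduce everything directly to \eqref{eq-general-of-condition-1} and the bound $F(R)=O(R^{n-2})$, so you never need \eqref{e^v-integral-mean} as an input. The paper proceeds in the opposite order, first proving \eqref{e^v-integral-mean} and then using it to evaluate $\frac{1}{\log R}\int_1^R\rho\psi(\rho)\,d\rho$.

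The gap is in your argument for \eqref{e^v-integral-mean}. The ``ODE'' $\beta RL'+(\alpha-2\beta)L=-R\Psi'+O(R^{-1})$ is not an independent equation: it is exactly the master identity rewritten, since your own integrated formula $\Psi(R)=-(\alpha-2\beta)\int_1^R L(\rho)\rho^{-1}d\rho-\beta L(R)+O(1)$ differentiates back to $R\Psi'=-(\alpha-2\beta)L-\beta RL'+O(R^{-1})$. In particular the ``forcing'' $R\Psi'$ is determined by $L$ and $L'$, and the integrating-factor representation collapses to a tautology. Knowing only $\Psi(R)/\log R\to-(\alpha-2\beta)A_0$ gives no control on $R\Psi'(R)$, and the Tauberian step you flag cannot be closed with the tools you list: boundedness of $RL'$ (i.e.\ $L$ Lipschitz in $\log R$) together with the logarithmic Ces\`aro condition does \emph{not} force $L$ to converge (take $L(\tau)=A_0+\sin\tau$ with $\tau=\log R$; then $g=\rho^2\psi=(n-2)L+\dot L$ is bounded and nonnegative for suitable $A_0$, yet $L$ oscillates).

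The paper obtains \eqref{e^v-integral-mean} first, by a compactness--l'Hospital argument: given any sequence $\rho_i\to\infty$, pass to a subsequence along which $\rho_i^2\psi(\rho_i)$ converges, then identify the limit via two applications of l'Hospital using $F(\rho)\to\infty$ and the hypothesis \eqref{eq-general-of-condition-1}. Once \eqref{e^v-integral-mean} is in hand, the paper integrates it to obtain \eqref{r-e^v-integral/log-r-limit} and hence \eqref{v-integral-log-mean}. If you keep your (nicer) route to \eqref{v-integral-log-mean}, you still need a separate, non-circular argument for \eqref{e^v-integral-mean}; the paper's subsequence/l'Hospital device is the missing ingredient.
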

\begin{proof}
Let $x_0\in\R^n$ and $\rho>0$. We first observe that \eqref{v-eqn} can be
rewritten as
\begin{equation}\label{v-eqn21}
\Delta v+(\alpha-n\beta)e^v+\beta\,\mbox{div}(xe^v)=0\quad\mbox{ in }\R^n.
\end{equation}
Integrating \eqref{v-eqn21} over $B_{\rho}(x_0)$, 
\begin{equation}\label{v-integral-eqn}
\begin{aligned}
0=&\rho^{n-1}\int_{|\sigma|=1}\frac{\partial v}{\partial \rho}(x_0+\rho\sigma)
\,d\sigma+(\alpha-n\beta)\int_{B_{\rho}(x_0)}e^v\,dx+\beta\rho^n
\int_{|\sigma|=1}e^{v(x_0+\rho\sigma)}\,d\sigma\\
&\qquad +\beta\rho^{n-1}\int_{|\sigma|=1}e^{v(x_0+\rho\sigma)}
\left(x_0\cdot\sigma\right)d\sigma.
\end{aligned}
\end{equation}
Let $R>R_0\geq 1$. Dividing \eqref{v-integral-eqn} by $\rho^{n-1}$ and 
integrating over $\rho\in(1,R)$,
\begin{equation}\label{eq-aligned-integrating-over-1-to-R-1}
\begin{aligned}
&\int_{|\sigma|=1}v(x_0+R\sigma)\,d\sigma-\int_{|\sigma|=1}v(x_0+\sigma)\,d\sigma\\
&\qquad =-(\alpha-n\beta)\int_{1}^{R}\frac{1}{\rho^{n-1}}
\left(\int_{B_{\rho}(x_0)}e^v\,dx\right)d\rho-\beta\int_{1}^{R}\rho
\left(\int_{|\sigma|=1}e^{v(x_0+\rho\sigma)}\,d\sigma\right)d\rho\\
&\qquad \qquad -\beta\int_{1}^{R}\left(\int_{|\sigma|=1}e^{v(x_0+\rho\sigma)}
(x_0\cdot\sigma)\,d\sigma\right)d\rho.
\end{aligned}
\end{equation}
Let $\{\rho_i\}_{i=1}^{\infty}$ be a sequence such that $\rho_i>2|x_0|+1$ for 
all $i\in\Z^+$ and $\rho_i\to\infty$ as $i\to\infty$. Then by \eqref{e^v-bd}
there exists a constant $C_1>0$ such that
\begin{equation*}
\rho_i^2\int_{|\sigma|=1}e^{v(x_0+\rho_i\sigma)}\,d\sigma
\le C_1\quad\forall i\in\Z^+.
\end{equation*}
Hence the sequence $\{\rho_i\}_{i=1}^{\infty}$ has a subsequence which we
may assume without loss of generality to be the sequence itself such that 
$$
\rho_i^2\int_{|\sigma|=1}e^{v(x_0+\rho_i\sigma)}\,d\sigma
$$ converges to some non-negative number as $i\to\infty$. On the other
hand by \eqref{eq-general-of-condition-1}, there exists a constant $R_1>1$
such that
\begin{align}
\frac{A_0}{2}\le&\frac{1}{\log R}\int_1^R\frac{1}{\rho^{n-1}}
\left(\int_{B_{\rho}(x_0)}e^v\,dx\right)\,d\rho\le\frac{1}{(n-2)\log R}
\int_{B_R(x_0)}e^v\,dx\quad\forall R\ge R_1\notag\\
\Rightarrow\quad\int_{B_{\rho}(x_0)}e^v\,dx\ge&\frac{(n-2)}{2}A_0\log\rho\to\infty
\quad\mbox{ as }\rho\to\infty.\label{e^v-integral=infty}
\end{align}
Then by \eqref{e^v-integral-cond}, \eqref{eq-general-of-condition-1}, 
\eqref{e^v-integral=infty}, and the l'Hospital rule,
\begin{equation*}
\begin{aligned}
A_0&=\lim_{\rho\to\infty}\frac{\frac{1}{\rho_i^{n-1}}\int_{B_{\rho_i}(x_0)}e^v\,dx}
{\frac{1}{\rho_i}}=\lim_{i\to\infty}\frac{1}{\rho_i^{n-2}}
\int_{B_{\rho_i}(x_0)}e^v\,dx\\
&=\lim_{i\to\infty}\frac{\rho_i^{n-1}\int_{|\sigma|=1}e^{v(x_0+\rho_i\sigma)}\,d\sigma}
{(n-2)\rho_i^{n-3}}=\frac{1}{(n-2)}\lim_{i\to\infty}\rho_i^{2}
\int_{|\sigma|=1}e^{v(x_0+\rho_i\sigma)}\,d\sigma
\end{aligned}
\end{equation*}
Since the sequence $\{\rho_i\}_{i=1}^{\infty}$ is arbitrary, 
\eqref{e^v-integral-mean} follows. Then by \eqref{e^v-integral-mean}
for any $0<\3<1$ there exists a constant $R_0>1$ such that
\begin{equation}\label{eq-bounded-of-surface-integral-e-v-over-rho-2---}
\left|\rho^2\int_{|\sigma|=1}e^{v(x_0+\rho\sigma)}d\sigma-(n-2)A_0\right|
<(n-2)A_0\3 \qquad \forall \rho\geq R_0.
\end{equation}
By \eqref{eq-bounded-of-surface-integral-e-v-over-rho-2---},
\begin{equation}\label{r-e^v-integral-lower-upper-bd}
\begin{aligned}
&\int_1^{R_0}\rho\left(\int_{|\sigma|=1}e^{v(x_0+\rho\sigma)}\,d\sigma\right)
d\rho+(1-\3)(n-2)A_0\log (R/R_0)\\
&\qquad\le\int_1^R\rho\left(\int_{|\sigma|=1}e^{v(x_0+\rho\sigma)}
\,d\sigma\right)d\rho\\
&\qquad\qquad
\le\int_1^{R_0}\rho\left(\int_{|\sigma|=1}e^{v(x_0+\rho\sigma)}\,d\sigma\right)
d\rho+(1+\3)(n-2)A_0\log (R/R_0)\quad\forall R>R_0
\end{aligned}
\end{equation}
and
\begin{equation}\label{e^v-surface-mean-bd1}
\begin{aligned}
&\left|\int_{1}^{R}\int_{|\sigma|=1}e^{v(x_0+\rho\sigma)}(x_0\cdot\sigma)
\,d\sigma d\rho\right|\\
&\qquad \leq |x_0|\int_{1}^{R_0}
\left(\int_{|\sigma|=1}e^{v(x_0+\rho\sigma)}\,d\sigma\right)d\rho
+2(n-2)A_0|x_0|\left(\frac{1}{R_0}-\frac{1}{R}\right)
\end{aligned}
\end{equation}
for any $R>R_0$. Dividing \eqref{r-e^v-integral-lower-upper-bd}
and \eqref{e^v-surface-mean-bd1} by $\log R$ and letting first 
$R\to\infty$ and then $\3\to 0$, we get
\begin{equation}\label{r-e^v-integral/log-r-limit}
\lim_{R\to\infty}\frac{1}{\log R}\int_1^R\rho
\left(\int_{|\sigma|=1}e^{v(x_0+\rho\sigma)}\,d\sigma\right)d\rho=(n-2)A_0 
\end{equation}
and
\begin{equation}\label{e^v-surface-mean-limit-1}
\lim_{R\to\infty}\frac{1}{\log R}\int_1^R\int_{|\sigma|=1}e^{v(x_0+\rho\sigma)}
(x_0\cdot\sigma\,d\sigma d\rho=0.
\end{equation}
Dividing \eqref{eq-aligned-integrating-over-1-to-R-1} by $\log R$ and letting 
$R\to\infty$, by \eqref{e^v-integral-cond}, \eqref{r-e^v-integral/log-r-limit}
and \eqref{e^v-surface-mean-limit-1}, we get \eqref{v-integral-log-mean} 
and the lemma follows.
\end{proof}

We now let
\begin{equation}\label{eq-cases-aligned-newtonian-potentials}
\begin{cases}
\begin{aligned}
w_1(x)&=\frac{1}{n(2-n)\omega_n}\int_{\R^n}
\left(\frac{1}{|x-y|^{n-2}}-\frac{1}{|y|^{n-2}}\right)e^{v(y)}\,dy \qquad 
\forall x\in\R^n\\
w_{2,R}(x)&=\frac{1}{n(2-n)\omega_n}\int_{|y|\leq R}
\left(\frac{1}{|x-y|^{n-2}}-\frac{1}{|y|^{n-2}}\right)\mbox{div}
\left(e^{v(y)}y\right)\,dy  \qquad \forall x\in B_R,\,\,R>0
\end{aligned}
\end{cases}
\end{equation}
where $\omega_n$ is the volume of the unit ball in $\R^n$. Then $w_1$ and 
$w_{2,R}$ are well-defined in $\R^n$ and $B_R$ respectively with 
$w_1\in C(\R^n)$ and $w_{2,R}\in C(B_R)$. Moreover
\begin{equation}\label{eq-cases-aligned-laplace-of-w1-and-w2}
\begin{cases}
\begin{aligned}
\La w_1&=e^v \qquad \qquad \qquad \qquad \mbox{in }\R^n\\
\La w_{2,R}&=\mbox{div}\left(e^{v(y)}y\right) \qquad \quad 
\mbox{ in }B_R\quad\forall R>0.
\end{aligned}
\end{cases}
\end{equation}

\begin{lemma}\label{lem-convergence-of-w2-w-r-t-R-to-infty}
As $R\to\infty$, $w_{2,R}$ will converge uniformly on every compact subset 
of $\R^n$ to
\begin{equation}\label{eq-aligend-w-2-after-integration-by-part}
\begin{aligned}
w_2(x)&=\frac{1}{n(n-2)\omega_n}\int_{\R^n}e^{v(y)}\nabla_y
\left(\frac{1}{|x-y|^{n-2}}-\frac{1}{|y|^{n-2}}\right)\cdot y\,dy\\
&=\frac{1}{n\omega_n}\int_{\R^n}\left(\frac{(x-y)\cdot y}{|x-y|^n}
+\frac{1}{|y|^{n-2}}\right)e^{v(y)}\,dy.
\end{aligned}
\end{equation}
\end{lemma}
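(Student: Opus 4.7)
The plan is to apply the divergence theorem to transfer the divergence appearing in $w_{2,R}$ onto the kernel
\[
K(x,y):=\frac{1}{|x-y|^{n-2}}-\frac{1}{|y|^{n-2}},
\]
at the cost of a boundary integral on $\{|y|=R\}$, and then to show that the boundary integral vanishes as $R\to\infty$ while the new volume integral converges to $w_2(x)$. A direct computation using $\nabla_y|x-y|^{-(n-2)}=(n-2)(x-y)/|x-y|^n$ and $\nabla_y|y|^{-(n-2)}=-(n-2)y/|y|^n$ gives
\[
\nabla_y K(x,y)\cdot y=(n-2)\left(\frac{(x-y)\cdot y}{|x-y|^n}+\frac{1}{|y|^{n-2}}\right),
\]
which confirms the second equality in \eqref{eq-aligend-w-2-after-integration-by-part}. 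Moreover $w_2$ is well-defined: the integrand is $O(|y|^{2-n})$ near $y=0$ and $O(|x|\,|x-y|^{1-n})$ near $y=x$, both locally integrable in $\R^n$; and for $|y|>2|x|$, expanding $|x-y|^{-n}$ in powers of $|x|/|y|$ shows that the leading $-|y|^{2-n}$ term in $\frac{(x-y)\cdot y}{|x-y|^n}$ cancels the explicit $+|y|^{2-n}$, leaving the improved tail estimate $|\nabla_y K(x,y)\cdot y|\le C|x|\,|y|^{1-n}$. Combined with \eqref{e^v-bd}, this produces an integrand bounded by $CA_1|x|\,|y|^{-n-1}$ on the tail.

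Next, I would carry out integration by parts on $B_R\setminus(\2{B}_\3(x)\cup\2{B}_\3(0))$ and send $\3\to 0$. The inner boundary contributions vanish: on $\partial B_\3(x)$ one has $|K|=O(\3^{2-n})$ over a surface of area $O(\3^{n-1})$, giving $O(\3)$; on $\partial B_\3(0)$ the additional factor $y\cdot\nu=O(\3)$ yields $O(\3^2)$. The result is
\[
w_{2,R}(x)=I_R(x)+J_R(x),
\]
where
\[
I_R(x)=\frac{1}{n(2-n)\omega_n}\int_{|y|=R}K(x,y)\,e^{v(y)}\,(y\cdot\nu)\,dS(y),
\]
\[
J_R(x)=\frac{1}{n(n-2)\omega_n}\int_{B_R}e^{v(y)}\,\nabla_y K(x,y)\cdot y\,dy.
\]

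For the boundary term, parametrizing $y=R\sigma$, using $y\cdot\nu=R$, and applying the bound $|K(x,R\sigma)|\le C|x|\,R^{1-n}$ valid for $R>2|x|$ gives
\[
|I_R(x)|\le C|x|\,R\int_{|\sigma|=1}e^{v(R\sigma)}\,d\sigma,
\]
which is $O(|x|/R)$ by \eqref{e^v-integral-mean} with $x_0=0$. For the volume term, the tail estimate above yields
\[
|J_R(x)-w_2(x)|\le C\int_{|y|>R}A_1|x|\,|y|^{-n-1}\,dy=O(|x|/R).
\]
Both estimates are uniform in $x$ on any fixed compact set, which establishes the claimed uniform convergence.

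The main obstacle is justifying the integration-by-parts step at the two singularities $y=x$ and $y=0$ of $K$; the excision argument above handles this cleanly. A secondary but important point is that the subtraction of $|y|^{2-n}$ in the definition of $K$ is indispensable: without it, the integrand defining $w_2$ would decay only like $|y|^{-n}$ at infinity after multiplication by $e^v=O(|y|^{-2})$, producing a logarithmically divergent tail.
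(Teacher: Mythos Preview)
Your argument is correct and follows essentially the same route as the paper: integrate by parts to split $w_{2,R}=I_R+J_R$, kill the boundary term via the kernel bound $|K(x,y)|\le C|x|/|y|^{n-1}$ for $|y|\ge 2|x|$ together with the spherical decay of $e^v$, and control the volume term with the Taylor-expansion estimate $|\nabla_yK(x,y)\cdot y|\le C|x|/|y|^{n-1}$. The only differences are cosmetic: you justify the integration by parts via an explicit excision of the singularities (the paper states the formula directly), you use the pointwise bound \eqref{e^v-bd} for the tail whereas the paper uses the spherical-average bound \eqref{eq-bounded-of-surface-integral-e-v-over-rho-2---}, and you estimate $|J_R-w_2|$ directly instead of invoking dominated convergence.
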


\begin{proof}
Let $R_0>1$, $R>2R_0$ and $|x|\le R_0$. Now
\begin{equation}\label{eq-aligned-seperation-of-w-2-into-two-parts-1}
\begin{aligned}
w_{2,R}(x)=&\frac{1}{n(2-n)\omega_n}\int_{|y|=R}\left(\frac{1}{|x-y|^{n-2}}
-\frac{1}{|y|^{n-2}}\right)e^{v(y)} y\cdot \nu(y)\,d\sigma_R(y)\\
&\qquad +\frac{1}{n(n-2)\omega_n}\int_{|y|<R}e^{v(y)}\nabla_y
\left(\frac{1}{|x-y|^{n-2}}-\frac{1}{|y|^{n-2}}\right)\cdot y\,dy\\
=&I_{1,R}(x)+I_{2,R}(x)
\end{aligned}
\end{equation}
where $\nu(y)=\frac{y}{|y|}$. Since there exists a constant $C_1>0$
such that 
\begin{align}\label{ratios-difference-eqn}
\left|\frac{1}{|x-y|^{n-2}}-\frac{1}{|y|^{n-2}}\right|
&=\left|\int_{0}^{1}\frac{\partial}{\partial\theta}
\left(\frac{1}{|\theta x-y|^{n-2}}\right)d\theta\right|
=(n-2)\left|\int_{0}^{1}\frac{(\theta x-y)\cdot x}{|\theta x-y|^n}\,d\theta
\right|\notag\\
&\le C_1\frac{|x|}{|y|^{n-1}} \qquad \forall |y|\ge 2|x|,
\end{align}
by \eqref{eq-bounded-of-surface-integral-e-v-over-rho-2---} there exists 
a constant $C_2>0$ such that
\begin{equation}\label{eq-convergence-to-zero-of-I-1-R-204859}
\sup_{|x|<R_0}\left|I_{1,R}(x)\right|\leq \frac{C_2R_0}{R} \to 0 \qquad 
\mbox{as }R\to\infty.
\end{equation}
By the Taylor expansion (P.231 of \cite{GNN}),
\begin{equation*}
\frac{1}{|x-y|^n}=\frac{1}{|y|^n}\left(1+\frac{n}{|y|^2}\sum_{j=1}^nx_jy_j
+O\left(\frac{1}{|y|^2}\right)\right)\quad \forall |y|\ge 2|x|,
\,\,x=(x_1\cdots,x_n),\,\,y=(y_1,\cdots,y_n).
\end{equation*}
Hence there exists a constant $C_3>0$ such that 
\begin{align}\label{eq-estimate-of-F-by-using-Taylor-expansion-F1}
&\left|\nabla_y\left(\frac{1}{|x-y|^{n-2}}-\frac{1}{|y|^{n-2}}\right)
\cdot y\right|\notag\\
&\qquad =(n-2)\left|\frac{(x-y)\cdot y}{|x-y|^n}+\frac{1}{|y|^{n-2}}\right|\notag\\
&\qquad =(n-2)\left|\frac{x\cdot y}{|y|^n}\left(1+\frac{n}{|y|^2}\sum_{j=1}^nx_jy_j
+\cdots\right)-\frac{1}{|y|^{n-2}}\left(\frac{n}{|y|^2}\sum_{j=1}^nx_jy_j
+\cdots\right)\right|\notag\\
&\qquad \le C_3\frac{|x|}{|y|^{n-1}}, \qquad\forall |y|\ge 2|x|,
\,\,x=(x_1\cdots,x_n),\,\,y=(y_1,\cdots,y_n).
\end{align}
By 
\eqref{eq-bounded-of-surface-integral-e-v-over-rho-2---} and
\eqref{eq-estimate-of-F-by-using-Taylor-expansion-F1},
\begin{equation}\label{eq-aligned-estimate-of-w-2-over-outside-of-2-R-0-1}
\begin{aligned}
&\int_{2R_0<|y|<R}e^{v(y)}\left|\nabla_y\left(\frac{1}{|x-y|^{n-2}}
-\frac{1}{|y|^{n-2}}\right)\cdot y\right|\,dy\\
&\qquad <\int_{2R_0}^{R}\frac{C_3R_0}{\rho^2}\left(
\rho^2\int_{|\sigma|=1}e^{v(\rho\sigma)}\,d\sigma\right)\,d\rho \le C'R_0\int_{2R_0}^{R}\frac{1}{\rho^2}\,d\rho\le C''<\infty
\end{aligned}
\end{equation}
holds for any $|x|\le R_0$ and $R>2R_0$. On the other hand since 
$v$ is continuous 
and $v$ satisfies \eqref{e^v-bd}, $e^v\in L^{\infty}(\R^n)$. Hence by 
\eqref{eq-estimate-of-F-by-using-Taylor-expansion-F1},
\begin{equation}\label{eq-aligned-estimate-of-w-2-over-inside-of-2-R-0-1}
\begin{aligned}
&\int_{|y|<2R_0}e^{v(y)}\left|\nabla_y
\left(\frac{1}{|x-y|^{n-2}}-\frac{1}{|y|^{n-2}}\right)\cdot y\right|\,dy\\
&\qquad \qquad \leq C_3R_0\int_{|y|<3R_0}\frac{1}{|y|^{n-1}}\,dy
=3n\omega_nC_3R_0^2<\infty.
\end{aligned}
\end{equation}
By
\eqref{eq-aligned-estimate-of-w-2-over-outside-of-2-R-0-1} and 
\eqref{eq-aligned-estimate-of-w-2-over-inside-of-2-R-0-1} and the Lebesgue 
Dominated Convergence Theorem,
\begin{equation}\label{eq-convergence-to-desired-value-of-I-2-R-1204859}
I_{2,R}(x)\to \frac{1}{n\omega_n}\int_{\R^n}
\left(\frac{(x-y)\cdot y}{|x-y|^n}+\frac{1}{|y|^{n-2}}\right)e^{v(y)}\,dy
\end{equation}
uniformly on $\{|x|\leq R_0\}$ as $R\to\infty$. By 
\eqref{eq-aligned-seperation-of-w-2-into-two-parts-1}, 
\eqref{eq-convergence-to-zero-of-I-1-R-204859} and 
\eqref{eq-convergence-to-desired-value-of-I-2-R-1204859}, the lemma follows.
\end{proof}

\begin{lemma}\label{lem-estimate-for-w-1-by-I-1-and-I-2}
There exists a constant $C>0$ such that
\begin{equation}\label{eq-bounded-of-w-1-by-log-function}
\left|w_1(x)\right|\le C\log|x|\qquad \forall |x|\ge 2.
\end{equation}
\end{lemma}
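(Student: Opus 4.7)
The plan is to estimate $w_1(x)$ by decomposing $\R^n$ into three pieces relative to $x$: an inner region $\Omega_1=\{|y|\le|x|/2\}$, a middle annulus $\Omega_2=\{|x|/2<|y|<2|x|\}$, and an outer region $\Omega_3=\{|y|\ge 2|x|\}$. On $\Omega_1$ and $\Omega_2$ I would use the triangle inequality to split the kernel $K(x,y):=|x-y|^{-(n-2)}-|y|^{-(n-2)}$ into its two summands, while on $\Omega_3$ the mean value estimate \eqref{ratios-difference-eqn} already gives $|K(x,y)|\le C|x|/|y|^{n-1}$, so the two summands need not be separated.

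Three tools drive the estimates. First, the pointwise bound $e^{v(y)}\le A_1/|y|^2$ from \eqref{e^v-bd}. Second, an integrated bound
\begin{equation*}
\int_{B_R(x_0)}e^{v}\,dy\le CR^{n-2}\qquad\text{for all large }R,
\end{equation*}
which follows from Lemma \ref{lem-limit-of-v-as-rho-to-infty} by passing to polar coordinates and using the surface asymptotic $\int_{|\sigma|=1}e^{v(x_0+\rho\sigma)}\,d\sigma=O(\rho^{-2})$ together with $n\ge 3$. Third, the mean value estimate \eqref{ratios-difference-eqn} itself.

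For $\Omega_3$, combining the third tool with the surface asymptotics yields $\int_{\Omega_3}|K(x,y)|e^{v(y)}\,dy\le C|x|\int_{2|x|}^{\infty}\rho^{-2}\,d\rho=O(1)$. For $\Omega_2$, the $|x-y|^{-(n-2)}$ summand is handled by pulling out $e^v\le 4A_1/|x|^2$ (valid since $|y|\ge|x|/2$) and noting $\Omega_2\subset B_{3|x|}(x)$, whence $\int_{\Omega_2}|x-y|^{-(n-2)}\,dy\le C|x|^2$; the $|y|^{-(n-2)}$ summand is handled by pulling out $|y|^{-(n-2)}\le 2^{n-2}|x|^{-(n-2)}$ and applying the second tool to $\int_{B_{2|x|}}e^{v}\,dy$. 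Both contribute $O(1)$. For $\Omega_1$, since $|x-y|\ge|x|/2$, the $|x-y|^{-(n-2)}$ summand is bounded by $C|x|^{-(n-2)}\int_{B_{|x|/2}}e^{v}\,dy=O(1)$ via the second tool. The $|y|^{-(n-2)}$ summand over $\Omega_1$ is the sole source of the logarithm: splitting at $|y|=1$, the contribution from $\{|y|\le 1\}$ is finite because $e^v$ is continuous at the origin and $|y|^{-(n-2)}$ is locally integrable, while the contribution from $\{1\le|y|\le|x|/2\}$ is bounded by $A_1\int_{1}^{|x|/2}r^{-1}\,dr=O(\log|x|)$ using the first tool.

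Summing the four pieces yields $|w_1(x)|\le C\log|x|$ for $|x|\ge 2$. The only substantive point is extracting the integrated bound $\int_{B_R(x_0)}e^v\le CR^{n-2}$ from Lemma \ref{lem-limit-of-v-as-rho-to-infty}, which the surface asymptotic provides once one integrates $\rho^{-2}\cdot\rho^{n-1}=\rho^{n-3}$ over $[R_0,R]$; the remaining bookkeeping with the kernel $K(x,y)$ is routine.
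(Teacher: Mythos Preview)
Your proof is correct and follows essentially the same strategy as the paper's: the far region $\{|y|\ge 2|x|\}$ is handled identically via \eqref{ratios-difference-eqn}, and the logarithm arises from the $|y|^{-(n-2)}$ summand over $\{1\le|y|\lesssim|x|\}$ using the pointwise bound \eqref{e^v-bd}. The only cosmetic difference is in how the near region $\{|y|\le 2|x|\}$ is subdivided for the $|x-y|^{-(n-2)}$ summand: the paper splits it according to whether $|x-y|\le|x|/2$ or $|x-y|\ge|x|/2$ (sets $D_1(x),D_2(x)$), whereas you split according to whether $|y|\le|x|/2$ or $|y|>|x|/2$; your version trades one direct pointwise computation for an appeal to the integrated bound $\int_{B_R}e^v\le CR^{n-2}$, which is immediate from the surface asymptotic anyway.
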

\begin{proof}
Let $|x|\ge 2$. We first split $w_1$ into two parts as follows.
\begin{align}\label{w1-split-eqn}
w_1(x)&=\frac{1}{n(2-n)\omega_n}\int_{|y|>2|x|}
\left(\frac{1}{|x-y|^{n-2}}-\frac{1}{|y|^{n-2}}\right)e^{v(y)}\,dy\notag\\
&\qquad +\frac{1}{n(2-n)\omega_n}\int_{|y|\le 2|x|}\frac{e^{v(y)}}{|x-y|^{n-2}}\,dy
+\frac{1}{n(n-2)\omega_n}\int_{|y|\le 2|x|}\frac{e^{v(y)}}{|y|^{n-2}}\,dy\notag\\
&:=I_1+I_2+I_3.
\end{align}
By \eqref{eq-bounded-of-surface-integral-e-v-over-rho-2---},
\begin{equation}
\label{eq-general-of-bounded-of-surface-integral-e-v-over-rho-2-}
\left|\int_{|\sigma|=1}e^{v(\rho\sigma)}\,d\sigma\right|
\leq\frac{C}{\rho^2},\qquad \forall\rho>0
\end{equation}
for some constant $C>0$. Hence by \eqref{ratios-difference-eqn} and 
\eqref{eq-general-of-bounded-of-surface-integral-e-v-over-rho-2-},
\begin{equation}\label{eq-estimate-for-I-1}
|I_1|\le C\int_{2|x|}^{\infty}\frac{|x|}{\rho^{n-1}}\cdot
\frac{1}{\rho^2}\cdot\rho^{n-1}\,d\rho=C'<\infty\quad\forall x\in\R^n.
\end{equation}
for some constant $C'>0$. By 
\eqref{eq-general-of-bounded-of-surface-integral-e-v-over-rho-2-},
\begin{equation}\label{eq-aligned-estimate-for-I-2}
\begin{aligned}
I_3&=\frac{1}{n(n-2)\omega_n}\int_{0<|y|\leq 1}\frac{e^{v(y)}}{|y|^{n-2}}\,dy
+\frac{1}{n(n-2)\omega_n}\int_{1\leq |y|\leq 2|x|}\frac{e^{v(y)}}{|y|^{n-2}}\,dy\\
&\le C\int_{0}^{1}\frac{1}{\rho^{n-2}}\cdot\rho^{n-1}\,d\rho
+C\int_1^{2|x|}\frac{1}{\rho^{n-2}}\cdot\frac{1}{\rho^2}\cdot\rho^{n-1}\,d\rho\\
&=C(1+\log(2|x|))\\
&\le C'\log|x|\qquad\qquad\forall |x|\ge 2. 
\end{aligned}
\end{equation}
On the other hand by \eqref{e^v-bd}, 
\begin{equation}\label{I2-upper-bd}
|I_2|\le\int_{D_1(x)}\frac{A_1}{|x-y|^{n-2}|y|^2}\,dy
+\int_{D_2(x)}\frac{A_1}{|x-y|^{n-2}|y|^2}\,dy=I_{2,1}+I_{2,2}
\end{equation}
where
\begin{equation}\label{D1-D2-defn}
\begin{cases}
D_1(x)=\{y\in\R^n:|y|\leq 2|x|\quad \mbox{and}\quad |x-y|\leq \frac{|x|}{2}\}\\
D_2(x)=\{y\in\R^n:|y|\leq 2|x|\quad \mbox{and}\quad |x-y|\geq \frac{|x|}{2}\}.
\end{cases}
\end{equation}
Since
\begin{equation*}
|x-y|\leq \frac{|x|}{2} \quad \Rightarrow \quad |y|\geq \frac{|x|}{2},
\end{equation*}
we have
\begin{equation}\label{eq-aligned-estimate-for-I-2-2-1}
I_{2,1}\leq \frac{C}{|x|^2}\int_{|x-y|\leq \frac{|x|}{2}}\frac{1}{|x-y|^{n-2}}\,dy
=C'<\infty\quad\forall x\ne 0
\end{equation}
for some constant $C'>0$. Finally on $D_2(x)$,
\begin{equation}\label{eq-aligned-estimate-for-I-2-2-2}
I_{2,2}\leq \frac{C}{|x|^{n-2}}\int_{|y|\leq 2|x|}\frac{1}{|y|^{2}}\,dy=C''
<\infty\quad\forall x\ne 0
\end{equation}
for some constant $C''>0$. By \eqref{w1-split-eqn}, 
\eqref{eq-estimate-for-I-1}, \eqref{eq-aligned-estimate-for-I-2}, 
\eqref{I2-upper-bd}, \eqref{eq-aligned-estimate-for-I-2-2-1} 
and \eqref{eq-aligned-estimate-for-I-2-2-2}, we get 
\eqref{eq-bounded-of-w-1-by-log-function} and lemma follows.
\end{proof}

\begin{lemma}\label{lem-v-+-w-1-+-w-2-=constant}
The following holds.
\begin{equation*}
v(x)+(\alpha-n\beta)w_1(x)+\beta w_2(x)=v(0)\quad\forall\R^n.
\end{equation*}
\end{lemma}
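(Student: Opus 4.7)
Introduce the auxiliary function
$$\phi(x) := v(x) + (\alpha-n\beta)w_1(x) + \beta w_2(x)$$
and prove the lemma by showing $\phi \equiv v(0)$. First, I would verify that $\phi$ is harmonic on $\R^n$: Lemma \ref{lem-convergence-of-w2-w-r-t-R-to-infty} together with $\Delta w_{2,R} = \mbox{div}(e^v y)$ in $B_R$ yields $\Delta w_2 = \mbox{div}(e^v y)$ in $\R^n$, and combining $\Delta w_1 = e^v$ with the divergence form \eqref{v-eqn21} of the equation gives $\Delta\phi = 0$ everywhere. A direct check shows $w_1(0) = 0$ (the kernel $|y|^{-(n-2)} - |y|^{-(n-2)}$ vanishes) and $w_2(0) = 0$ (from the second expression in \eqref{eq-aligend-w-2-after-integration-by-part}, where $(-y)\cdot y/|y|^n + 1/|y|^{n-2} = 0$), so $\phi(0) = v(0)$.

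The second step is a one-sided growth bound for $\phi$. From \eqref{e^v-bd} we get $v(x)\le \log A_1 - 2\log|x|$ for $|x|\ge 1$, and Lemma \ref{lem-estimate-for-w-1-by-I-1-and-I-2} gives $|w_1(x)|\le C\log|x|$ for $|x|\ge 2$. An entirely parallel argument applied to the kernel in \eqref{eq-aligend-w-2-after-integration-by-part} yields $|w_2(x)|\le C\log|x|$: the Taylor estimate \eqref{eq-estimate-of-F-by-using-Taylor-expansion-F1} handles the contribution from $|y|>2|x|$, while the splitting $D_1(x)\cup D_2(x)$ of \eqref{D1-D2-defn} together with $e^v\le A_1/|y|^2$ handles the part $|y|\le 2|x|$ (the first term $(x-y)\cdot y/|x-y|^n$ in the kernel is estimated using $|(x-y)\cdot y|\le |x-y||y|$, giving contributions identical in form to $I_{2,1}$ and $I_{2,2}$ of Lemma \ref{lem-estimate-for-w-1-by-I-1-and-I-2}). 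Summing, $\phi(x) \le C(1+\log|x|)$ for $|x|\ge 2$.

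Finally, with $\phi$ harmonic and satisfying a one-sided logarithmic bound, I conclude $\phi$ is constant as follows. Fix $x_0\in\R^n$ and $R>0$, and set $M_R := \sup_{B_R(x_0)}\phi$. The function $M_R - \phi$ is non-negative and harmonic in $B_R(x_0)$, so the standard gradient estimate for non-negative harmonic functions yields
$$|\nabla \phi(x_0)| \;=\; |\nabla(M_R-\phi)(x_0)| \;\le\; \frac{C_n}{R}\bigl(M_R - \phi(x_0)\bigr) \;\le\; \frac{C_n}{R}\bigl(C(1+\log(1+|x_0|+R)) - \phi(x_0)\bigr),$$
which tends to $0$ as $R\to\infty$. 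Hence $\nabla\phi\equiv 0$, so $\phi$ is constant, and $\phi(0)=v(0)$ forces $\phi\equiv v(0)$. The main obstacles I anticipate are (i) establishing $|w_2(x)|\le C\log|x|$ cleanly, which requires careful bookkeeping of the Taylor tail and the near-diagonal region in \eqref{eq-aligend-w-2-after-integration-by-part}, and (ii) working with only a one-sided bound on $\phi$---no pointwise lower bound on $v$ is available at this stage of the paper, which is precisely why the non-negative harmonic gradient estimate, rather than the classical Liouville theorem for bounded harmonic functions, is the right tool.
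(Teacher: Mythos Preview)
Your proposal is correct and proceeds by a genuinely different Liouville argument than the paper's.

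Both proofs begin the same way: set $q=\phi=v+(\alpha-n\beta)w_1+\beta w_2$, check it is harmonic from \eqref{v-eqn21} and \eqref{eq-cases-aligned-laplace-of-w1-and-w2} together with Lemma~\ref{lem-convergence-of-w2-w-r-t-R-to-infty}, and note $q(0)=v(0)$. After that they diverge. The paper compares the two ball averages
\[
q(x_0)-q(x_1)=\frac{1}{|B_R|}\Bigl(\int_{B_R(x_0)}q-\int_{B_R(x_1)}q\Bigr),
\]
splits $q$ into its three summands, and shows each annular contribution is $o(1)$ as $R\to\infty$; for the $v$ piece it needs the sign $v<0$ outside a large ball and invokes the spherical-average limit \eqref{v-integral-log-mean} from Lemma~\ref{lem-limit-of-v-as-rho-to-infty}, while for $w_1,w_2$ it carries out annular versions of the estimates you sketch. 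Your route is shorter: from \eqref{e^v-bd}, Lemma~\ref{lem-estimate-for-w-1-by-I-1-and-I-2}, and the analogous bound $|w_2(x)|\le C\log|x|$ (which indeed follows exactly as you indicate, and is essentially what the paper proves later inside Lemma~\ref{lem-v-+-w-1-+-w-2-=constant} and Lemma~\ref{lem-estimate-of-w-2-with-log-function-1}), you get the one-sided bound $\phi(x)\le C(1+\log|x|)$; then the gradient estimate $|\nabla u(x_0)|\le (n/R)\,u(x_0)$ for the nonnegative harmonic function $u=M_R-\phi$ on $B_R(x_0)$ forces $\nabla\phi\equiv 0$. The advantage of your approach is that it is more self-contained: it never appeals to Lemma~\ref{lem-limit-of-v-as-rho-to-infty}, using only the pointwise bound \eqref{e^v-bd} to control $v$ from above. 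The paper's approach, on the other hand, avoids isolating the bound $|w_2|\le C\log|x|$ as a separate step, folding it into the annular estimates. Both are valid; yours is the cleaner packaging.
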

\begin{proof}
Let $q=v+(\alpha-n\beta)w_1+\beta w_2$. Then by 
\eqref{eq-cases-aligned-laplace-of-w1-and-w2} and 
Lemma \ref{lem-convergence-of-w2-w-r-t-R-to-infty}, 
\begin{equation*}
\La q=0\quad\mbox{ in }\R^n\quad\mbox{ in the distribution sense.} 
\end{equation*}
Let $x_0,x_1\in\R^n$. By \eqref{e^v-bd} there exists a constant $R_0>0$ 
such that
\begin{equation}\label{v-negative-eqn}
v(x)<0 \qquad \forall |x|\ge R_0.
\end{equation}
By the mean value theorem for harmonic functions,
\begin{equation}\label{eq-express-of-difference-after-using-MVT-of-harmonic}
q(x_0)-q(x_1)=\frac{1}{|B_R|}\left(\int_{B_{R}(x_0)}q\,dx
-\int_{B_{R}(x_1)}q\,dx\right) \qquad \forall R>0.
\end{equation}
Let $a=|x_0-x_1|$ and $R>R_0+2+2a+|x_0|+|x_1|$. Since $B_{R_0}\subset 
B_{R-2a}(x_1)\subset B_{R-a}(x_0)\subset B_{R}(x_1)$, 
\begin{equation}\label{sets-inclusion-relation}
\R^n\setminus B_{R-a}(x_0)\subset\R^n\setminus B_{R-2a}(x_1)\subset
\R^n\setminus B_{R_0}.
\end{equation}
Then by \eqref{v-negative-eqn}, \eqref{sets-inclusion-relation} and
Lemma \ref{lem-limit-of-v-as-rho-to-infty},
\begin{equation}
\label{eq-aligned-estimate-of-difference-of-v-between-two-points}
\begin{aligned}
&\frac{1}{|B_R|}\left(\int_{B_{R}(x_0)}v\,dx-\int_{B_{R}(x_1)}v\,dx\right)\\
&\qquad =\frac{1}{|B_R|}\left(\int_{B_{R}(x_0)\setminus B_{R-a}(x_0)}v\,dx
-\int_{B_{R}(x_1)\setminus B_{R-a}(x_0)}v\,dx\right)\\
&\qquad \le-\frac{1}{|B_R|}\int_{B_{R}(x_1)\setminus B_{R-a}(x_0)}v\,dx\\
&\qquad \le-\frac{1}{|B_R|}\int_{B_{R}(x_1)\setminus B_{R-2a}(x_1)}v\,dx\\
&\qquad \leq-\frac{1}{|B_R|}\left(\inf_{R-2a\leq\rho\leq R}
\left[\frac{1}{\log\rho}\int_{|\sigma|=1}v(x_1+\rho\sigma)
\,d\sigma\right]\right)\int_{R-2a}^{R}\rho^{n-1}\log\rho\,d\rho\\
&\qquad \leq -\frac{2aR^{n-1}\log R}{|B_{R}|}\left(\inf_{R-2a\leq\rho\leq R}
\left[\frac{1}{\log\rho}\int_{|\sigma|=1}v(x_1+\rho\sigma)\,d\sigma\right]
\right)\\
&\qquad \to 0 \qquad \qquad \mbox{ as }R\to\infty.
\end{aligned}
\end{equation} 
On the other hand, since $\left(B_{R}(x_0)\bs B_{R}(x_1)\right)\cup
\left(B_{R}(x_1)\bs B_{R}(x_0)\right)\subset\left(B_{R+a}(x_0)\bs B_{R-a}(x_0)
\right)$,
\begin{equation}\label{eq-aligned-seperate-of-int-of-w-1-w-2-into-J-1-J-2}
\begin{aligned}
&\frac{1}{|B_R|}\left(\int_{B_R(x_0)}\left((\alpha-n\beta)w_1+\beta w_2\right)
\,dx-\int_{B_R(x_1)}\left((\alpha-n\beta)w_1+\beta w_2\right)\,dx\right)\\
&\qquad \qquad \leq \frac{|\alpha-n\beta|}{|B_R|}
\int_{B_{R+a}(x_0)\bs B_{R-a}(x_0)}|w_1|\,dx+\frac{|\beta|}{|B_R|}
\int_{B_{R+a}(x_0)\bs B_{R-a}(x_0)}|w_2|\,dx\\
&\qquad \qquad :=|\alpha-n\beta|J_1+|\beta|J_2.
\end{aligned}
\end{equation}
By Lemma \ref{lem-estimate-for-w-1-by-I-1-and-I-2},
\begin{equation}\label{eq-estimate-of-J-1-final-daufhdsihe}
J_1\le\frac{C_1}{\left|B_R\right|}\int_{R-a\leq|x-x_0|\leq R+a}\log |x|\,dx
\leq \frac{2C_1a\log(|x_0|+R+a)|\partial B_{R+a}|}{|B_R|}\to 0\qquad 
\mbox{as $R\to\infty$} 
\end{equation}
for some constant $C_1>0$. We next observe that by 
\eqref{eq-aligend-w-2-after-integration-by-part},
\begin{align}\label{J2-split-eqn}
J_2\le&\frac{1}{n\omega_n|B_R|}\int_{B_{R+a}(x_0)\bs B_{R-a}(x_0)}
\left|\int_{|y|>2|x|}\left(\frac{(x-y)\cdot y}{|x-y|^n}+\frac{1}{|y|^{n-2}}
\right)e^{v(y)}\,dy\right|\,dx\notag\\
&\quad +\frac{1}{n\omega_n|B_R|}\int_{B_{R+a}(x_0)\bs B_{R-a}(x_0)}\left[
\int_{|y|\le 2|x|}\frac{|y|}{|x-y|^{n-1}}e^{v(y)}\,dy
+\int_{|y|\le 2|x|}\frac{e^{v(y)}}{|y|^{n-2}}\,dy\right]\,dx\notag\\
:=&J_{2,1}+J_{2,2}.
\end{align}
By \eqref{eq-estimate-of-F-by-using-Taylor-expansion-F1} and
\eqref{eq-general-of-bounded-of-surface-integral-e-v-over-rho-2-},
\begin{align}
\label{eq-estimate-of-J-2-1-by-absolutevalue-of-ball-R-a}
J_{2,1}\leq&\frac{C}{|B_R|}\int_{B_{R+a}(x_0)\bs B_{R-a}(x_0)}|x|\left(
\int_{2|x|}^{\infty}\frac{1}{\rho^{n-1}}\cdot\frac{1}{\rho^2}
\cdot\rho^{n-1}\,d\rho\right)dx\notag\\
=&\frac{C}{2|B_R|}\left|B_{R+a}(x_0)\bs B_{R-a}(x_0)\right|\notag\\
\leq&C'\frac{(R+a)^{n-1}}{R^n}\to 0 \qquad \mbox{as $R\to\infty$}.
\end{align}
Let
$$
\4{I}_1=\int_{|y|\leq 2|x|}\frac{|y|}{|x-y|^{n-1}}e^{v(y)}\,dy\qquad\mbox{ and }
\qquad \4{I}_2=\int_{|y|\leq 2|x|}\frac{e^{v(y)}}{|y|^{n-2}}\,dy.
$$
Then by the proof of Lemma \ref{lem-estimate-for-w-1-by-I-1-and-I-2},
\begin{equation}\label{eq-aligned-estimate-of-the-first-term-of-J-2-1-1-}
\4{I}_2\le C\log|x|\qquad\forall |x|\ge 2.
\end{equation}
On the other hand by \eqref{e^v-bd},
\begin{align}\label{eq-esstimate-of-quantity-G-34}
\4{I}_1\le&\int_{|y|\leq 2|x|}\frac{A_1}{|x-y|^{n-1}|y|}\,dy
=\int_{D_1(x)}\frac{A_1}{|x-y|^{n-1}|y|}\,dy
+\int_{D_2(x)}\frac{A_1}{|x-y|^{n-1}|y|}\,dy\notag\\
=&\4{I}_{1,1}+\4{I}_{1,2}
\end{align}
where $D_1(x)$, $D_2(x)$, are as given by \eqref{D1-D2-defn}.
Since $|y|\ge |x|/2$ for all $y\in D_1(x)$, 
\begin{equation}\label{eq-aligned-estimate-for-J-2-2-1}
\4{I}_{1,1}\leq \frac{C}{|x|}\int_{|x-y|\leq \frac{|x|}{2}}\frac{1}{|x-y|^{n-1}}\,dy
=C'<\infty
\end{equation}
and
\begin{equation}\label{eq-aligned-estimate-for-J-2-2-2}
\4{I}_{1,2}\leq\frac{2^{n-1}A_1}{|x|^{n-1}}\int_{|y|\leq 2|x|}\frac{1}{|y|}\,dy=C''
<\infty
\end{equation}
for some constants $C'>0$, $C''>0$. By 
\eqref{eq-aligned-estimate-of-the-first-term-of-J-2-1-1-}, 
\eqref{eq-esstimate-of-quantity-G-34},
\eqref{eq-aligned-estimate-for-J-2-2-1} and  
\eqref{eq-aligned-estimate-for-J-2-2-2},
\begin{equation}\label{eq-estimate-of-J-2-2-by-absolutevalue-of-ball-R-a}
\begin{aligned}
J_{2,2}\le&\frac{C}{|B_R|}\int_{B_{R+a}(x_0)\setminus B_{R-a}(x_0)}(1+\log |x|)\,dx\\
\le&C(1+\log(|x_0|+R+a))\frac{\left|B_{R+a}(x_0)
\bs B_{R-a}(x_0)\right|}{|B_R|}\\
\le&C'\frac{(R+a)^{n-1}\log (a+|x_0|+R)}{R^n}\to 0 \qquad 
\mbox{as $R\to \infty$}.
\end{aligned}
\end{equation}
Letting $R\to 0$ in 
\eqref{eq-express-of-difference-after-using-MVT-of-harmonic}, by 
\eqref{eq-aligned-estimate-of-difference-of-v-between-two-points}, 
\eqref{eq-aligned-seperate-of-int-of-w-1-w-2-into-J-1-J-2}, 
\eqref{eq-estimate-of-J-1-final-daufhdsihe},
\eqref{J2-split-eqn},
\eqref{eq-estimate-of-J-2-1-by-absolutevalue-of-ball-R-a} and 
\eqref{eq-estimate-of-J-2-2-by-absolutevalue-of-ball-R-a},
we get
\begin{equation*}
q(x_0)-q(x_1)\le 0 \qquad \forall x_0,\,x_1\in\R^n.
\end{equation*}
Since $x_0$, $x_1$ are arbitrary, by interchanging the roles of $x_0$ 
and $x_1$ in the above argument we get
\begin{equation*}
q(x_1)-q(x_0)\le 0 \qquad \forall x_0,\,x_1\in\R^n.
\end{equation*}
Hence
\begin{equation*}
q(x_0)-q(x_1)=0\qquad \forall x_0,\,x_1\in\R^n.
\end{equation*}
Thus $q$ is a constant. Hence $q(x)=q(0)=v(0)$ for any $x\in\R^n$ 
and the lemma follows.
\end{proof}

\begin{lemma}\label{lem-estimate-of-w-1-with-log-function-1}
\begin{equation*}
\frac{w_1(x)}{\log|x|}\to \frac{A_0}{n\omega_n} \qquad \mbox{as $|x|\to\infty$}.
\end{equation*}
\end{lemma}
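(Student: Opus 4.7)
The plan is to extract the $\log|x|$ asymptotic of $w_1(x)$ by splitting the defining integral according to the size of $|y|$ relative to $|x|$, and then identifying the single piece that carries the logarithmic growth. Writing $\frac{1}{n(2-n)\omega_n} = -\frac{1}{n(n-2)\omega_n}$ and decomposing as in the proof of Lemma \ref{lem-estimate-for-w-1-by-I-1-and-I-2},
\begin{equation*}
w_1(x) = -\frac{1}{n(n-2)\omega_n}\bigl[I_1(x) + I_2(x) - I_3(x)\bigr],
\end{equation*}
where $I_1 = \int_{|y|>2|x|}\bigl(|x-y|^{2-n}-|y|^{2-n}\bigr)e^{v(y)}\,dy$, $I_2 = \int_{|y|\le 2|x|}|x-y|^{2-n}e^{v(y)}\,dy$, and $I_3 = \int_{|y|\le 2|x|}|y|^{2-n}e^{v(y)}\,dy$.

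The first step is to show that $I_1$ and $I_2$ are both $O(1)$ and therefore drop out after dividing by $\log|x|$. The bound on $I_1$ is essentially what was proved in Lemma \ref{lem-estimate-for-w-1-by-I-1-and-I-2}. For $I_2$, I would split the domain into $\{|y|<|x|/2\}$ and $\{|x|/2\le|y|\le 2|x|\}$: on the first region $|x-y|\ge|x|/2$, so the integrand is bounded by $2^{n-2}|x|^{2-n}e^{v(y)}$, and $\int_{|y|<|x|/2}e^v\,dy = O(|x|^{n-2})$ by \eqref{e^v-bd}, yielding an $O(1)$ contribution; on the second region \eqref{e^v-bd} gives $e^v\le 4A_1|x|^{-2}$, while $\int_{|x-y|\le 3|x|}|x-y|^{2-n}\,dy = O(|x|^2)$, again contributing $O(1)$.

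The heart of the argument is the asymptotic of $I_3$. Passing to spherical coordinates,
\begin{equation*}
I_3(x) = \int_0^{2|x|}\frac{1}{\rho}\left[\rho^2\int_{|\sigma|=1}e^{v(\rho\sigma)}\,d\sigma\right]d\rho.
\end{equation*}
By \eqref{e^v-integral-mean} applied with $x_0=0$, the bracketed quantity converges to $(n-2)A_0$ as $\rho\to\infty$, so a standard Ces\`aro-type estimate (split the $\rho$-integral at some large $R_0$, use the uniform bound \eqref{eq-general-of-bounded-of-surface-integral-e-v-over-rho-2-} below $R_0$ and the $\3$-closeness to $(n-2)A_0$ above) gives $I_3(x) = (n-2)A_0\log|x| + o(\log|x|)$. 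Assembling the three pieces yields
\begin{equation*}
w_1(x) = -\frac{1}{n(n-2)\omega_n}\bigl[-(n-2)A_0\log|x| + o(\log|x|)\bigr] = \frac{A_0}{n\omega_n}\log|x| + o(\log|x|),
\end{equation*}
from which the lemma follows after dividing by $\log|x|$. The main obstacle is the $I_3$ asymptotic: the previous lemmas only exploited the uniform upper bound \eqref{eq-general-of-bounded-of-surface-integral-e-v-over-rho-2-}, but to pin down the constant $A_0/(n\omega_n)$ here one really needs the \emph{exact} limit \eqref{e^v-integral-mean}; all other estimates are soft and only contribute $O(1)$ errors.
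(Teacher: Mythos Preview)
Your proposal is correct and follows essentially the same approach as the paper: the same three-term decomposition $I_1+I_2+I_3$, the observation that $I_1$ and $I_2$ are $O(1)$, and the use of \eqref{e^v-integral-mean} to pin down the $\log|x|$ growth of $I_3$. The only cosmetic differences are that the paper bounds $I_2$ by splitting at $|x-y|=|x|/2$ rather than $|y|=|x|/2$, and for $I_3$ it further splits at $|y|=|x|$ and applies l'Hospital's rule to $I_{3,1}$ instead of your direct Ces\`aro-type estimate.
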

\begin{proof}
Let $I_1$, $I_2$, and $I_3$ be the same as the proof of 
Lemma \ref{lem-estimate-for-w-1-by-I-1-and-I-2}. Then $I_1$ satisfies
\eqref{eq-estimate-for-I-1}. By the proof of 
Lemma \ref{lem-estimate-for-w-1-by-I-1-and-I-2} there exists a constant 
$C_1>0$ such that
\begin{equation}\label{I2-bd-eqn}
|I_2|\le C_1\quad\forall x\in\R^n.
\end{equation}
By Lemma \ref{lem-limit-of-v-as-rho-to-infty} there exists a constant 
$R_0>1$ such that \eqref{eq-bounded-of-surface-integral-e-v-over-rho-2---}
holds with $\3=1/3$. Now
\begin{equation}\label{I3-defn}
\begin{aligned}
I_3&=\frac{1}{n(n-2)\omega_n}\int_{|y|\leq |x|}\frac{e^{v(y)}}{|y|^{n-2}}
\,dy
+\frac{1}{n(n-2)\omega_n}\int_{|x|\leq |y|\leq 2|x|}\frac{e^{v(y)}}{|y|^{n-2}}
\,dy\\
&=I_{3,1}+I_{3,2}.
\end{aligned}
\end{equation}
Since by \eqref{eq-bounded-of-surface-integral-e-v-over-rho-2---} for any 
$|x|>R_0$,
\begin{equation}\label{I31-to-infty-eqn}
I_{3,1}\ge\frac{1}{n(n-2)\omega_n}\int_{R_0}^{|x|}\rho
\int_{|\sigma|=1}e^{v(\rho\sigma)}\,d\sigma\,d\rho
\ge C\int_{R_0}^{|x|}\frac{d\rho}{\rho}=C\log\left(\frac{|x|}{R_0}\right)
\to\infty\quad\mbox{as }|x|\to\infty, 
\end{equation}
by 
\eqref{e^v-integral-mean} and 
l'Hospital rule,
\begin{equation}
\label{eq-aligned-estimate-for-w-2-over-log-x-with-splitting-into-four-2-09}
\lim_{|x|\to\infty}\frac{I_{3,1}}{\log |x|}=\frac{\lim_{|x|\to\infty}|x|^2
\int_{|\sigma|=1}e^{v(|x|\sigma)}\,d\sigma}{n(n-2)\omega_n}=\frac{A_0}{n\omega_n}.
\end{equation}
By \eqref{eq-bounded-of-surface-integral-e-v-over-rho-2---},
\begin{equation}
\label{I-32-bd}
I_{3,2}=\frac{1}{n(n-2)\omega_n}
\int_{|x|}^{2|x|}\rho\left(\int_{|\sigma|=1}e^{v(\rho\sigma)}\,d\sigma\right)
\,d\rho\leq C_2\int_{|x|}^{2|x|}\frac{d\rho}{\rho}=C_2\log 2<\infty
\quad \forall |x|>R_0,
\end{equation}
for some constant $C_2>0$. By \eqref{I3-defn}, 
\eqref{eq-aligned-estimate-for-w-2-over-log-x-with-splitting-into-four-2-09}
and \eqref{I-32-bd}, 
\begin{equation}\label{I3-limit}
\lim_{|x|\to\infty}\frac{I_3}{\log |x|}=\frac{A_0}{n\omega_n}.
\end{equation}
Hence by \eqref{eq-estimate-for-I-1}, \eqref{I2-bd-eqn} and \eqref{I3-limit},
we get
\begin{equation*}
\lim_{|x|\to\infty}\frac{w_1(x)}{\log |x|}=\frac{A_0}{n\omega_n}
\end{equation*}
and lemma the follows. 
\end{proof}

\begin{lemma}\label{lem-estimate-of-w-2-with-log-function-1}
\begin{equation}\label{eq-result-inequality-of-lemma-in-section-4-with-w-2}
\frac{w_2(x)}{\log|x|}\to \frac{(n-2) A_0}{n\omega_n} \qquad 
\mbox{as $|x|\to\infty$}.
\end{equation}
\end{lemma}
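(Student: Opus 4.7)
The plan is to mimic the proof of Lemma~\ref{lem-estimate-of-w-1-with-log-function-1}, starting from the representation
\[
w_2(x)=\frac{1}{n\omega_n}\int_{\R^n}\left(\frac{(x-y)\cdot y}{|x-y|^n}+\frac{1}{|y|^{n-2}}\right)e^{v(y)}\,dy
\]
established in Lemma~\ref{lem-convergence-of-w2-w-r-t-R-to-infty}. I would split $w_2=K_1+K_2+K_3$, where $K_1$ is the integral of the full kernel over $\{|y|>2|x|\}$, while
\[
K_2=\frac{1}{n\omega_n}\int_{|y|\le 2|x|}\frac{(x-y)\cdot y}{|x-y|^n}e^{v(y)}\,dy,\qquad K_3=\frac{1}{n\omega_n}\int_{|y|\le 2|x|}\frac{e^{v(y)}}{|y|^{n-2}}\,dy.
\]
The aim is to show that $K_1$ and $K_2$ are bounded uniformly in $x$, while $K_3/\log|x|\to (n-2)A_0/(n\omega_n)$.

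The estimate on $K_1$ follows immediately from the Taylor expansion bound \eqref{eq-estimate-of-F-by-using-Taylor-expansion-F1}, which gives $\bigl|\tfrac{(x-y)\cdot y}{|x-y|^n}+\tfrac{1}{|y|^{n-2}}\bigr|\le C|x|/|y|^{n-1}$ for $|y|\ge 2|x|$, combined with the surface bound \eqref{eq-general-of-bounded-of-surface-integral-e-v-over-rho-2-}, so that $|K_1|\le C|x|\int_{2|x|}^\infty \rho^{-2}\,d\rho\le C'$ uniformly in $x$. For $K_2$ I would apply the Cauchy--Schwarz inequality $|(x-y)\cdot y|\le|x-y||y|$ together with \eqref{e^v-bd} to obtain
\[
|K_2|\le\frac{A_1}{n\omega_n}\int_{|y|\le 2|x|}\frac{dy}{|x-y|^{n-1}|y|},
\]
and this is exactly the integral bounded during the proof of Lemma~\ref{lem-v-+-w-1-+-w-2-=constant} through the decomposition $\{|y|\le 2|x|\}=D_1(x)\cup D_2(x)$ from \eqref{D1-D2-defn}; see \eqref{eq-aligned-estimate-for-J-2-2-1}--\eqref{eq-aligned-estimate-for-J-2-2-2}. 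Both pieces are uniformly bounded, so $K_2/\log|x|\to 0$.

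Finally $K_3=(n-2)I_3$, where $I_3$ is the quantity analyzed in Lemma~\ref{lem-estimate-of-w-1-with-log-function-1} and shown there via l'Hospital's rule and \eqref{e^v-integral-mean} to satisfy $I_3/\log|x|\to A_0/(n\omega_n)$; hence $K_3/\log|x|\to (n-2)A_0/(n\omega_n)$. Adding the three contributions produces \eqref{eq-result-inequality-of-lemma-in-section-4-with-w-2}. The only subtle point is $K_2$: the kernel $(x-y)\cdot y/|x-y|^n$ has a $|x-y|^{-(n-1)}$ singularity near $y=x$ which could a priori generate logarithmic growth in $|x|$, but after invoking Cauchy--Schwarz and \eqref{e^v-bd} the remaining integral falls back to the tame shape already handled in Lemma~\ref{lem-v-+-w-1-+-w-2-=constant}, leaving $K_3$ as the sole source of the logarithmic asymptotics.
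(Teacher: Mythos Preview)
Your proposal is correct and follows essentially the same approach as the paper: the decomposition $K_1,K_2,K_3$ coincides (up to relabeling) with the paper's $I_2,I_1,(n-2)I_3$, and each piece is handled by the same tools---the Taylor bound \eqref{eq-estimate-of-F-by-using-Taylor-expansion-F1} with \eqref{eq-general-of-bounded-of-surface-integral-e-v-over-rho-2-} for the far region, the $D_1(x)\cup D_2(x)$ splitting from Lemma~\ref{lem-v-+-w-1-+-w-2-=constant} for the near region, and the $I_3$ analysis from Lemma~\ref{lem-estimate-of-w-1-with-log-function-1} for the logarithmic term.
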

\begin{proof}
By \eqref{eq-aligend-w-2-after-integration-by-part},
\begin{equation}\label{w2-3parts}
\begin{aligned}
w_2(x)&=\frac{1}{n\omega_n}\int_{|y|\leq 2|x|}\frac{(x-y)\cdot y}{|x-y|^n}
\cdot e^{v(y)}\,dy
+\frac{1}{n\omega_n}\int_{|y|>2|x|}\left(
\frac{(x-y)\cdot y}{|x-y|^n}+\frac{1}{|y|^{n-2}}\right)\,e^{v(y)}\,dy\\
&\qquad +\frac{1}{n\omega_n}\int_{|y|\leq 2|x|}\frac{e^{v(y)}}{|y|^{n-2}}
\,dy\\
&=I_1+I_2+(n-2)I_3.
\end{aligned}
\end{equation}
Since
$$
|I_1|\le \frac{1}{n\omega_n}\int_{|y|\leq 2|x|}\frac{|y|}{|x-y|^{n-1}}e^{v(y)}\,dy,
$$
by the proof of 
Lemma \ref{lem-v-+-w-1-+-w-2-=constant} there exists a constant $C>0$ such that
\begin{equation}\label{I1-uniform-bd}
|I_1|\le C\quad\forall x\in\R^n.
\end{equation}
By \eqref{eq-estimate-of-F-by-using-Taylor-expansion-F1} and
\eqref{eq-general-of-bounded-of-surface-integral-e-v-over-rho-2-},
\begin{equation}
\label{eq-aligned-estimate-for-w-2-over-log-x-with-splitting-into-four-1}
|I_2|\le C|x|\int_{|y|\geq 2|x|}\frac{e^{v(y)}}{|y|^{n-1}}\,dy
\le C'|x|\int_{2|x|}^{\infty}\frac{d\rho}{\rho^2}=C''<\infty
\end{equation}
for some constants $C'>0$, $C''>0$. By the proof of Lemma \ref{lem-estimate-of-w-1-with-log-function-1}, 
$I_3$ satisfies \eqref{I3-limit}.
By \eqref{I3-limit}, \eqref{w2-3parts}, \eqref{I1-uniform-bd} and
\eqref{eq-aligned-estimate-for-w-2-over-log-x-with-splitting-into-four-1}, 
we get 
\eqref{eq-result-inequality-of-lemma-in-section-4-with-w-2} and the lemma 
follows.
\end{proof}

\noindent We are now ready for the proof of Theorem \ref{v-limit-thm}.

\begin{proof}[\textbf{Proof of Theorem \ref{v-limit-thm}}]
By Lemma \ref{lem-v-+-w-1-+-w-2-=constant}, 
Lemma \ref{lem-estimate-of-w-1-with-log-function-1} and 
Lemma \ref{lem-estimate-of-w-2-with-log-function-1}, 
\begin{equation*}
\lim_{|x|\to\infty}\frac{v(x)}{\log|x|} =-L
\end{equation*}
where
\begin{equation*}
L=\frac{(\alpha-2\beta)A_0}{n\omega_n}.
\end{equation*}
Suppose that $L>2$. Then there exist constants $a\in (2,n)$ and $R_1>1$
such that
\begin{equation*}
e^{v(x)} \leq |x|^{-a}\qquad \forall |x|\geq R_1.
\end{equation*} 
Hence
\begin{align*}
&0<\frac{1}{\log R}\int_1^{R}\frac{1}{\rho^{n-1}}\left(\int_{B_{\rho}}e^{v}\,dy
\right)\,d\rho \leq \frac{C}{\log R}, \qquad \forall R>R_1\\
&\qquad \Rightarrow \lim_{R\to\infty}\frac{1}{\log R}\int_1^{R}\frac{1}{\rho^{n-1}}
\left(\int_{B_{\rho}}e^{v}\,dy\right)\,d\rho=0\quad\mbox{ as }R\to\infty 
\end{align*}
which contradicts \eqref{e^v-integral-cond}. Hence $L\le 2$.
Suppose that $L<2$. Then there exist constants $b\in (0,2)$ and $R_2>1$ such 
that
\begin{equation}\label{e^v-lower-bd}
e^{v(x)} \geq |x|^{-b}\qquad \forall |x|\geq R_2.
\end{equation} 
By \eqref{e^v-lower-bd} and a direct computation,
\begin{align*}
&\frac{1}{\log R}\int_1^{R}\frac{1}{\rho^{n-1}}\left(\int_{B_{\rho}}e^{v}\,dy
\right)\,d\rho \geq \frac{C_1R^{2-b}}{\log R}-\frac{C_2}{\log R}\qquad 
\forall R>R_2\\
\Rightarrow\quad&\lim_{R\to\infty}\frac{1}{\log R}\int_1^{R}\frac{1}{\rho^{n-1}}
\left(\int_{B_{\rho}}e^{v}\,dy\right)\,d\rho=\infty\quad\mbox{ as }R\to\infty
\end{align*} 
which again contradicts \eqref{e^v-integral-cond}. Hence $L=2$ and 
the theorem follows.
\end{proof}

\begin{cor}\label{A0-value-cor}
If $v$ is a solution of \eqref{v-eqn} which satisfies 
\eqref{e^v-integral-cond} and 
\eqref{e^v-bd}, then 
\begin{equation*}
\alpha>2\beta \qquad \mbox{and} \qquad A_0=\frac{2n\omega_n}{\alpha-2\beta}.
\end{equation*}
\end{cor}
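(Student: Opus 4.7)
The plan is simply to unpack the constants that were already computed during the proof of Theorem \ref{v-limit-thm}. First I would recall that the combination of Lemma \ref{lem-v-+-w-1-+-w-2-=constant}, Lemma \ref{lem-estimate-of-w-1-with-log-function-1}, and Lemma \ref{lem-estimate-of-w-2-with-log-function-1} gives the explicit asymptotic
\begin{equation*}
\lim_{|x|\to\infty}\frac{v(x)}{\log|x|}=-L,\qquad L=\frac{(\alpha-2\beta)A_0}{n\omega_n},
\end{equation*}
and that Theorem \ref{v-limit-thm} asserts that this same limit equals $-2$. Matching the two expressions forces $L=2$.

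Next, I would solve the identity $L=2$ for $A_0$. This yields
\begin{equation*}
(\alpha-2\beta)A_0 = 2n\omega_n.
\end{equation*}
Since $A_0\in(0,\infty)$ by the hypothesis \eqref{e^v-integral-cond} and $2n\omega_n>0$, the left-hand side must be positive, which forces $\alpha-2\beta>0$, i.e.\ $\alpha>2\beta$. Dividing through gives the stated formula $A_0=\frac{2n\omega_n}{\alpha-2\beta}$.

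There is essentially no obstacle here: the corollary is a bookkeeping consequence of the value of $L$ computed inside the proof of Theorem \ref{v-limit-thm}. The one point worth noting is that the argument establishing $L=2$ in that theorem ruled out the cases $L>2$ and $L<2$ separately using only \eqref{e^v-integral-cond} and pointwise estimates on $e^v$; it never required $A_0$ to be known explicitly. In particular, the explicit value $A_0=\tfrac{2n\omega_n}{\alpha-2\beta}$ genuinely emerges only at the end, by reading off the constant in $L$ once the equality $L=2$ has been secured.
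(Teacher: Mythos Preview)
Your proposal is correct and matches the paper's intent: the corollary is stated without proof in the paper, as it is an immediate consequence of the identity $L=\frac{(\alpha-2\beta)A_0}{n\omega_n}=2$ obtained in the proof of Theorem \ref{v-limit-thm} together with the hypothesis $A_0\in(0,\infty)$.
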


\section{Radial symmetry of the solution}
\setcounter{equation}{0}
\setcounter{thm}{0}

In this section we will prove that under some condition on the solution
$v$ of \eqref{v-eqn}, $v$ is radially symmetric about the origin.
We first start with a proposition.

\begin{prop}\label{rv'-infty-prop}
Let $n\ge 3$, $\alpha>2\beta$, $a_0\in\R$, and let $v$ be the unique 
radially symmetric solution of \eqref{v-eqn} with $v(0)=a_0$ 
constructed in \cite{Hs2}. Then \eqref{e^v-limit} holds and
\begin{equation*}
\lim_{r\to\infty}rv'(r)=-2.
\end{equation*}
\end{prop}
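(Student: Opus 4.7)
The plan is to reduce to Hsu's result for \eqref{e^v-limit} and then derive a closed-form expression for $rv'(r)$ that allows a direct limit computation via l'Hospital's rule. The assertion \eqref{e^v-limit} is the theorem of Hsu stated in the introduction (the hypothesis $\alpha>2\beta$ together with the existence condition for a radial solution implies $\alpha>\max(2\beta,0)$, so her theorem applies). The new content of the proposition is the second limit.

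To get it, I would rewrite \eqref{v-eqn} in radial coordinates as
\begin{equation*}
v''(r)+\frac{n-1}{r}\,v'(r)+\alpha e^{v(r)}+\beta r\,e^{v(r)}v'(r)=0,
\end{equation*}
recast this in divergence form as $\bigl(r^{n-1}v'(r)\bigr)'+\alpha r^{n-1}e^{v(r)}+\beta r^n e^{v(r)}v'(r)=0$, and integrate from $0$ to $r$ (using $v'(0)=0$ for the smooth radial solution). Handling the last term by the integration by parts identity
\begin{equation*}
\int_0^r s^n e^{v(s)}v'(s)\,ds = r^n e^{v(r)}-n\int_0^r s^{n-1}e^{v(s)}\,ds
\end{equation*}
collapses everything into the explicit formula
\begin{equation*}
r\,v'(r) = -(\alpha-n\beta)\,r^{2-n}\!\int_0^r s^{n-1}e^{v(s)}\,ds \;-\;\beta\, r^2 e^{v(r)}.
\end{equation*}

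Now I would pass to the limit $r\to\infty$ in each term. By \eqref{e^v-limit}, $\beta r^2 e^{v(r)}\to \frac{2\beta(n-2)}{\alpha-2\beta}$. For the first term, \eqref{e^v-limit} forces $s^{n-1}e^{v(s)}\sim \frac{2(n-2)}{\alpha-2\beta}\,s^{n-3}$ as $s\to\infty$, and since $n\ge 3$ this integrand is not integrable at infinity, so the integral diverges and l'Hospital's rule is legitimate, giving
\begin{equation*}
\lim_{r\to\infty}\frac{1}{r^{n-2}}\int_0^r s^{n-1}e^{v(s)}\,ds \;=\; \lim_{r\to\infty}\frac{r^{n-1}e^{v(r)}}{(n-2)r^{n-3}} \;=\; \frac{1}{n-2}\cdot\frac{2(n-2)}{\alpha-2\beta} \;=\; \frac{2}{\alpha-2\beta}.
\end{equation*}

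Combining, $\displaystyle\lim_{r\to\infty}rv'(r) = -\frac{2(\alpha-n\beta)+2\beta(n-2)}{\alpha-2\beta} = -\frac{2(\alpha-2\beta)}{\alpha-2\beta} = -2$. There is no real obstacle: once the divergence-form integration and the integration by parts have produced the explicit formula, everything reduces to algebra and one application of l'Hospital, both driven by \eqref{e^v-limit}. The only mildly delicate point is the justification of the l'Hospital step, but this is immediate from the $s^{n-3}$ asymptotic just noted.
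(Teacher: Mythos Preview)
Your proof is correct and follows essentially the same approach as the paper: both derive the explicit formula $rv'(r)=-(\alpha-n\beta)\,r^{2-n}\int_0^r s^{n-1}e^{v(s)}\,ds-\beta r^2e^{v(r)}$ (the paper by specializing the earlier integrated identity \eqref{v-integral-eqn} to $x_0=0$, you by writing out the radial ODE and integrating by parts) and then pass to the limit using \eqref{e^v-limit} and l'Hospital's rule. Your explicit justification of the l'Hospital step via the $s^{n-3}$ asymptotic is a small addition, but otherwise the arguments are the same.
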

\begin{proof}
We first observe that \eqref{e^v-limit} is proved in \cite{Hs2}. 
We next observe that by putting $x_0=0$ in \eqref{v-integral-eqn} 
and simplifying,
\begin{equation*}
rv'(r)=\frac{n\beta-\alpha}{r^{n-2}}\int_0^r\rho^{n-1}e^{v(\rho)}\,d\rho 
-\beta r^2e^{v(r)}.
\end{equation*}
Hence by \eqref{e^v-limit},
\begin{align*}
\lim_{r\to\infty}rv'(r)=&\lim_{r\to\infty}\frac{n\beta-\alpha}{r^{n-2}}
\int_0^r\rho^{n-1}e^{v(\rho)}\,d\rho -\beta\lim_{r\to\infty}r^2e^{v(r)}\\
=&(n\beta-\alpha)\lim_{r\to\infty}\frac{r^{n-1}e^{v(r)}}{(n-2)r^{n-3}}
-\beta\lim_{r\to\infty}r^2e^{v(r)}\\
=&\left[\frac{n\beta-\alpha}{n-2}-\beta\right]\lim_{r\to\infty}r^2e^{v(r)}\\
=&-2.
\end{align*}
\end{proof}

\begin{prop}\label{cor-behaviour-of-x-cdot-nabla-v-at-infty-d}
Let $n\ge 3$, $\alpha>2\beta$, and let $v$ be a solution of \eqref{v-eqn} 
which satisfies \eqref{e^v-integral-cond}, \eqref{e^v-bd} and 
\eqref{grad-v-bd-assumption}. Then
\begin{equation*}
x\cdot\nabla v\to -2 \qquad \mbox{uniformly as $|x|\to\infty$}. 
\end{equation*}
\end{prop}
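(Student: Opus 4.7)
The plan is to proceed by a blow-down (rescaling) argument. Given an arbitrary sequence $\{x_k\}\subset\R^n$ with $R_k:=|x_k|\to\infty$, write $\omega_k=x_k/R_k\in S^{n-1}$ and introduce the rescaled solutions
\[
v_k(x):=v(R_k x)+2\log R_k,\qquad x\in\R^n.
\]
A direct computation shows that each $v_k$ still satisfies \eqref{v-eqn} with the same constants $\alpha,\beta$, and that the bounds \eqref{e^v-bd}, \eqref{grad-v-bd-assumption} transfer uniformly as $|x|^2 e^{v_k(x)}\le A_1$ and $|x\cdot\nabla v_k(x)|\le C$. Applying Lemma~\ref{lem-limit-of-v-as-rho-to-infty} to $v$ at radius $R_k\rho\to\infty$ yields
\[
\rho^2\int_{|\sigma|=1}e^{v_k(\rho\sigma)}\,d\sigma\to(n-2)A_0 \quad\mbox{and}\quad \rho^{2-n}\int_{B_\rho}e^{v_k}\,dy\to A_0
\]
for each fixed $\rho>0$ as $k\to\infty$. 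Since $x_k\cdot\nabla v(x_k)=\omega_k\cdot\nabla v_k(\omega_k)$, the claim reduces to showing $\omega_k\cdot\nabla v_k(\omega_k)\to -2$.

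Next I would extract compactness of the rescaled family. From the PDE, $\Delta v_k=-\alpha e^{v_k}-\beta e^{v_k}(x\cdot\nabla v_k)$ is uniformly bounded on every annulus $\{a\le|x|\le b\}\subset\R^n\setminus\{0\}$. Combined with the radial gradient bound and a suitable normalization (for instance subtracting $c_k:=\frac{1}{n\omega_n}\int_{|\sigma|=1}v_k(\sigma)\,d\sigma$, which stays finite thanks to the convergence of $(n\omega_n)^{-1}\int_{|\sigma|=1}e^{v_k(\sigma)}\,d\sigma$ to $(n-2)A_0/(n\omega_n)$), standard Schauder and Calder\'on--Zygmund estimates give uniform $C^{1,\alpha}_{\mathrm{loc}}(\R^n\setminus\{0\})$ bounds. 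Passing to a subsequence, $v_k\to v_\infty$ in $C^1_{\mathrm{loc}}(\R^n\setminus\{0\})$, and $v_\infty$ solves \eqref{v-eqn} in $\R^n\setminus\{0\}$ while still satisfying $\rho^2\int_{|\sigma|=1}e^{v_\infty(\rho\sigma)}\,d\sigma=(n-2)A_0$ for every $\rho>0$.

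I would then identify $v_\infty$ by scale-invariance. For each $\lambda>0$ we have $v_{\lambda R_k}(x)=v_k(\lambda x)+2\log\lambda$, so passing to the limit along the same subsequence gives $v_\infty(\lambda x)+2\log\lambda=v_\infty(x)$ for all $\lambda>0$, forcing $v_\infty(x)=-2\log|x|+C_0$ for a constant $C_0$. Substituting into \eqref{v-eqn} and using Corollary~\ref{A0-value-cor} pins down $e^{C_0}=2(n-2)/(\alpha-2\beta)$. Since $x\cdot\nabla v_\infty(x)\equiv -2$, the $C^1_{\mathrm{loc}}$ convergence yields $\omega_k\cdot\nabla v_k(\omega_k)\to\omega_\infty\cdot\nabla v_\infty(\omega_\infty)=-2$ along the subsequence, where $\omega_\infty\in S^{n-1}$ is any accumulation point of $\omega_k$; arbitrariness of $\{x_k\}$ then gives the desired uniform convergence.

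The hard part is the compactness step: \eqref{grad-v-bd-assumption} controls only the radial component of $\nabla v_k$, so ruling out tangential oscillation of $v_k-c_k$ on spheres $|x|=\mbox{const}$ --- which is what is needed to apply elliptic regularity and extract a $C^1_{\mathrm{loc}}$ limit --- requires a delicate combination of the radial Lipschitz-up-to-log estimate coming from \eqref{grad-v-bd-assumption}, the spherical-average asymptotics of Lemma~\ref{lem-limit-of-v-as-rho-to-infty}, and the PDE itself.
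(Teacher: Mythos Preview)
Your blow-down strategy is genuinely different from the paper's route, and it is an appealing idea, but as written it has two real gaps.

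\medskip
\noindent\textbf{The compactness step.} You correctly isolate this as the hard part, but the justification you offer does not close it. The claim that the normalizing constants $c_k=\frac{1}{n\omega_n}\int_{|\sigma|=1}v_k(\sigma)\,d\sigma$ stay bounded ``thanks to the convergence of $\int_{|\sigma|=1}e^{v_k(\sigma)}\,d\sigma$'' is not valid: a bound on $\int e^{v_k}$ over a sphere does not control $\int v_k$ from below (one can have $v_k$ very negative on most of the sphere). In fact Lemma~\ref{lem-limit-of-v-as-rho-to-infty} only gives $c_k=o(\log R_k)$, which may be unbounded. And even after a good normalization, the hypotheses control only the radial derivative of $v_k$; nothing prevents large tangential oscillation of $v_k-c_k$ on spheres, so the appeal to ``standard Schauder and Calder\'on--Zygmund estimates'' is unjustified without an a~priori $L^\infty$ bound on $v_k-c_k$.

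\medskip
\noindent\textbf{The identification step.} Even granting $C^1_{\mathrm{loc}}$ compactness, the scale-invariance argument is circular. The identity $v_{\lambda R_k}(x)=v_k(\lambda x)+2\log\lambda$ is correct, and along your subsequence the right-hand side converges to $v_\infty(\lambda x)+2\log\lambda$. But the left-hand side is the rescaling along the \emph{different} sequence of scales $\{\lambda R_k\}$; you have no reason to believe that this converges to the \emph{same} $v_\infty$ --- that would amount to assuming uniqueness of blow-down limits, which is essentially what you are trying to prove. So the conclusion $v_\infty(\lambda x)+2\log\lambda=v_\infty(x)$ is unjustified, and with it the identification $v_\infty(x)=-2\log|x|+C_0$.

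\medskip
\noindent\textbf{What the paper does instead.} The paper sidesteps both difficulties by never attempting compactness of the $v_k$. It works directly with the bounded function $V(x)=x\cdot\nabla v(x)$, derives the equation $\Delta V=F$ with $F$ involving $V$, $\nabla V$ and $e^v$, and then runs an interior-gradient/interpolation argument (Green's representation plus the harmonic gradient estimate) to obtain $\|\nabla V\|_{L^\infty(\R^n\setminus B_r)}\le C/r$. This single estimate makes the family $\{V(r\,\cdot\,)\}_{r>1}$ equi-H\"older on $S^{n-1}$, so Arzel\`a--Ascoli applies to $V$ itself, and Theorem~\ref{v-limit-thm} pins every subsequential limit to $-2$. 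This is considerably more economical than trying to control the full rescaled solution.
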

\begin{proof}
Let
\begin{equation*}
V(x)=x\cdot\nabla v(x).
\end{equation*}
By \eqref{v-eqn} and a direct computation $V$ satisfies
\begin{equation}\label{eq-aligned-La-V-=-F}
\begin{aligned}
\La V=F(x)\qquad\mbox{ in }\R^n
\end{aligned}
\end{equation}
where
\begin{equation}\label{F-defn}
F(x)\equiv -\left(2\alpha+(\alpha+2\beta) V+\beta V^2\right)e^{v}
-\beta(x\cdot\nabla V)e^{v}.
\end{equation} 
Let $0<\mu<1$ and $x\in\R^n$ be such that $|x|>1$. For any $y\in 
B_{\frac{|x|}{2}}(x)$, let 
$$
d_y=\mbox{dist}\left(y,\partial B_{\frac{|x|}{2}}(x)\right)
\qquad\mbox{ and }\qquad R=\frac{1}{3}d_y.
$$ 
Then by Green's representation formula, 
\begin{equation}\label{Green-representation}
V(z)=H(z)+N(z) \qquad \forall z\in B_{2\mu R}\left(y\right)
\end{equation}
for some harmonic function $H$ in $B_{2\mu R}\left(y\right)$ 
where 
\begin{equation}\label{N(x)-formula}
N(z)=\frac{1}{n(2-n)\omega_n}\int_{B_{2\mu R}(y)}|z-w|^{2-n}F(w)\,dw
\end{equation}
is the Newtonian potential of $F$ in $B_{2\mu R}(y)$. Since 
$d_z\ge d_y/3=R$ for any $z\in B_{2\mu R}(y)$, by 
\eqref{Green-representation}, \eqref{N(x)-formula}, and Theorem 2.1
and Lemma 4.1 of \cite{GT},
\begin{align*}
d_y|\nabla N(y)|=&\frac{d_y}{n(n-2)\omega_n}\left|
\int_{B_{2\mu R}(y)}\nabla_y(|y-w|^{2-n})F(w)\,dw\right|\\
\le&C_1\mu d_y^2\sup_{B_{2\mu R}(y)}\left|F\right|\\
\le&C_1\mu\|d_z^2F(z)\|_{L^{\infty}(B_{|x|/2}(x))}
\end{align*}
and
\begin{align*}
d_y|\nabla H(y)|\le&\frac{C_2d_y}{\mu R}\sup_{B_{2\mu R}(y)}|H|\le\frac{3C_2}{\mu}
\left(\sup_{B_{2\mu R}(y)}|V|+\mu^2R^2\sup_{B_{2\mu R}(y)}|F|\right)\\
\le&\frac{3C_2}{\mu}\left(\|V\|_{L^{\infty}(B_{|x|/2}(x))}
+\mu^2\left\|d_z^2F(z)\right\|_{L^{\infty}(B_{|x|/2}(x))}\right)
\end{align*}
for some constants $C_1>0$, $C_2>0$. Hence
\begin{equation}\label{d_y-nabla-V-upper-bd1}
\|d_y\nabla V(y)\|_{L^{\infty}(B_{|x|/2}(x))}\le\frac{C_3}{\mu}
\left(\|V\|_{L^{\infty}(B_{|x|/2}(x))}
+\mu^2\|d_z^2F(z)\|_{L^{\infty}(B_{|x|/2}(x))}\right)
\end{equation}
for some constant $C_3>0$. Since $d_z\le |x|/2\le |z|$ for any 
$z\in B_{|x|/2}(x)$, 
\begin{equation}\label{d_z^2-bd}
d_z^2e^{v(z)}\le A_1\quad\forall z\in B_{|x|/2}(x).
\end{equation}
Then by \eqref{e^v-bd}, \eqref{F-defn} and \eqref{d_z^2-bd}, 
\begin{equation}\label{d_zF-upper-bd}
|d_z^2F(z)|\le C_4(1+\|V\|_{L^{\infty}(B_{|x|/2}(x))}+\|V\|_{L^{\infty}(B_{|x|/2}(x))}^2
+\|d_w\nabla V(w)\|_{L^{\infty}(B_{|x|/2}(x))})
\end{equation} 
for some constant $C_4>0$ and any $z\in B_{|x|/2}(x)$. Hence by 
\eqref{d_y-nabla-V-upper-bd1} and \eqref{d_zF-upper-bd},
\begin{align}\label{d_y-nabla-V-upper-bd2}
&\|d_y\nabla V(y)\|_{L^{\infty}(B_{|x|/2}(x))}\notag\\
&\quad \le \frac{C_5}{\mu}(1+\|V\|_{L^{\infty}(B_{|x|/2}(x))}+\|V\|^2_{L^{\infty}(B_{|x|/2}(x))})
+\mu^2\|d_y\nabla V(y)\|_{L^{\infty}(B_{|x|/2}(x))}
\end{align}
for some constant $C_5>0$. We now choose $\mu=1/(2C_5)$. Then by 
\eqref{d_y-nabla-V-upper-bd2},
\begin{equation*}
\frac{|x|}{4}\|\nabla V\|_{L^{\infty}(B_{|x|/4}(x))}
\le\|d_y\nabla V(y)\|_{L^{\infty}(B_{|x|/2}(x))}
\le 2C_5\left(1+\|V\|_{L^{\infty}(\R^n)}+\|V\|^2_{L^{\infty}(\R^n)}\right).
\end{equation*}
Hence
\begin{equation}\label{eq-for-uniform-convergence-of-x-cdot-nabla-v-3}
\|\nabla V\|_{L^{\infty}(B_{\frac{|x|}{4}}\left(x\right))}
\leq \frac{C_6}{|x|}\left(1+\|V\|_{L^{\infty}(\R^n)}+\|V\|^2_{L^{\infty}(\R^n)}\right)
\end{equation}
where $C_6=8C_5$. Let $r>1$. Taking supremum over all $|x|\ge r$ in 
\eqref{eq-for-uniform-convergence-of-x-cdot-nabla-v-3}, we get
\begin{equation}\label{eq-for-uniform-convergence-of-x-cdot-nabla-v-394857869}
\|\nabla V\|_{L^{\infty}(\R^n\bs B_{r})}\leq \frac{C_6}{r}
\left(1+\|V\|_{L^{\infty}(\R^n)}+\|V\|^2_{L^{\infty}(\R^n)}\right).
\end{equation}
We next choose $\3_0>0$ such that for any $\sigma$, $\sigma'\in S^{n-1}$, 
the line segment $l_{\sigma,\sigma'}$ 
joining $\sigma$ and $\sigma'$ is outside $B_{\frac{1}{2}}$, i.e., 
$|\xi|\geq \frac{1}{2}$ for all $\xi\in l_{\sigma,\sigma'}$. Then by 
\eqref{eq-for-uniform-convergence-of-x-cdot-nabla-v-394857869}, 
$\forall x=r\sigma$, $x'=r\sigma'$, $|\sigma-\sigma'|<\epsilon_0$, 
\begin{align*}
|x\cdot\nabla v(x)-x'\cdot\nabla v(x')|=&|V(x)-V(x')|\\
=&\left|\int_0^1\frac{\1}{\1 t}V(tx+(1-t)x')\,dt\right|\\
\le&|x-x'|\cdot\sup_{0\le t\le 1}|\nabla V(tx+(1-t)x')|\\
\le&|\sigma-\sigma'|\cdot\sup_{0\le t\le 1}\frac{C}{t\sigma+(1-t)\sigma'}\\
\le&C|\sigma-\sigma'|
\end{align*}
for some constant $C>0$. Hence the family $\{rv_r(r\sigma)\}_{r>1}$ is 
equi-H\"older continuous on $S^{n-1}$. Let $\{r_i\}_{i=1}^{\infty}$ be a sequence
of positive numbers such that $r_i\to\infty$ as $i\to\infty$. Then 
$\{r_{i}\}_{i=1}^{\infty}$ has a subsequence which we may assume 
without loss of generality to be the sequence itself that converges 
uniformly on $S^{n-1}$ as $i\to\infty$. Then by the l'Hospital rule and
Theorem \ref{v-limit-thm},
\begin{equation*}
\lim_{i\to\infty}r_iv_{r_i}(r_i\sigma)
=\lim_{i\to\infty}\frac{v(r_i\sigma)}{\log r_i}=-2 \qquad 
\mbox{ uniformly on $S^{n-1}$ as $i\to\infty$}.
\end{equation*}
Since the sequnece $\{r_i\}_{i=1}^{\infty}$ is arbitrary, $rv_r(r\sigma)\to-2$ 
uniformly on $S^{n-1}$ as $r\to\infty$ and the proposition follows.
\end{proof}

\noindent We will now prove Theorem \ref{radial-symmetric-thm}.

\begin{proof}[\textbf{Proof of Theorem \ref{radial-symmetric-thm}}]
Since any rotation in $\R^n$ can be decomposed into a finite number of 
rotations in 2-dimensional planes, it suffices to prove that 
\begin{equation*}
\Phi_{12}\equiv 0 \qquad \mbox{in $\R^n$}.
\end{equation*}
By direct computation $\Phi_{12}$ satisfies
\begin{equation*}
\La q+\alpha e^{v}q+\beta\left(x\cdot\nabla v\right)e^{v}q
+\beta\left(x\cdot\nabla q\right)e^{v}=0.
\end{equation*}
Let $w(x)=\Phi_{12}(x)/g(x)$ where $g(x)=|x|^{2-n}$. Then $w$ satisfies
\begin{equation*}\label{eq-satisfied-by-overline-v-sub-theta}
\La w+\left(\frac{2\nabla g}{g}+\beta e^{v}x\right)\cdot\nabla w
+\left(\alpha+\beta(x\cdot\nabla v)
+\frac{\beta(x\cdot\nabla g)}{g}\right)e^{v}w=0\quad\mbox{ in }\R^n.
\end{equation*}
If $\beta\le 0$, then $\alpha<n\beta\le 2\beta$. Then by 
Corollary \ref{non-existence-cor} \eqref{v-eqn} has no solution and
contradiction arises. Hence $\beta>0$. We now choose $\3>0$ such 
that $\alpha-n\beta+\epsilon\beta<0$. By 
Proposition \ref{cor-behaviour-of-x-cdot-nabla-v-at-infty-d} there 
exists a constant $R_0>0$ such that  
\begin{equation*}
x\cdot\nabla v<-2+\3\qquad \forall |x|\geq R_0.
\end{equation*}
Then
\begin{equation*}
\begin{aligned}
\alpha+\beta(x\cdot\nabla v)+\frac{\beta(x\cdot\nabla g)}{g}
\le\alpha-n\beta+\3\beta<0 \qquad\forall |x|\geq R_0
\end{aligned}
\end{equation*}
Suppose $v$ is radially symmetric in $B_{R_0}$. Then $\Phi_{12}\equiv 0$ in 
$\overline{B_{R_0}}$. Hence
\begin{equation*}
w\equiv 0 \qquad \mbox{in $\overline{B_{R_0}}$}.
\end{equation*}
We next observe that by \eqref{radial-symm-assum-infty},
\begin{equation*}
|w|\leq |x|^{n-2}|\Phi_{12}(x)|\to 0\qquad \mbox{as $|x|\to \infty$}.
\end{equation*}
Then by the maximum principle for $w$ in $R^n\bs\overline{B_{R_0}}$, 
\begin{equation*}
w\equiv 0\qquad \mbox{in $\R^n\bs \overline{B_{R_0}}$}.
\end{equation*}
Hence
\begin{equation*}
\Phi_{12}\equiv 0 \qquad \mbox{in $\R^n$}.
\end{equation*}
Thus $v$ is radially symmetric in $\R^n$.
\end{proof}

\noindent {\bf Acknowledgement} Sunghoon Kim supported by Priority Research Centers Program through the National Research Foundation of Korea(NRF) funded by the Ministry of Education, Science and Technology(2012047640).

\end{document}